%\oddsidemargin
%0cm \evensidemargin 0cm
%%theorem enviroment--------------------------------------------
%%figure environment---------------------------------------------

\documentclass[12pt, a4paper]{article}
%%%%%%%%%%%%%%%%%%%%%%%%%%%%%%%%%%%%%%%%%%%%%%%%%%%%%%%%%%%%%%%%%%%%%%%%%%%%%%%%%%%%%%%%%%%%%%%%%%%%%%%%%%%%%%%%%%%%%%%%%%%%
\usepackage{amsmath}   %美国数学会的数学公式宏包
\usepackage{comment}
\usepackage{amsfonts}
\usepackage{amssymb}   %美国数学会的数学符号宏包
\usepackage{epsfig}
\usepackage{graphicx}
\usepackage{amsthm}
\usepackage{newlfont}
\usepackage{eucal}
\usepackage{eufrak}
\usepackage{color}

\setcounter{MaxMatrixCols}{10}
%TCIDATA{OutputFilter=LATEX.DLL}
%TCIDATA{Version=4.00.0.2312}
%TCIDATA{LastRevised=Monday, February 06, 2006 19:45:25}
%TCIDATA{<META NAME="GraphicsSave" CONTENT="32">}
\makeatletter

\makeatother
\textheight 25.5cm \textwidth 17 cm \topmargin -2.0 cm
\oddsidemargin -0.5 cm
\newtheorem{thm}{Theorem}[section]
\newtheorem{lem}[thm]{Lemma}

\newtheorem{pro}[thm]{Proposition}

\newtheorem{con}[thm]{Conjecture}

\theoremstyle{definition}
\newtheorem{den}[thm]{Definition}
\theoremstyle{remark}
\newtheorem*{rem}{Remark}

\renewcommand{\labelenumi}{(\roman{enumi})}
\graphicspath{{figures/}}
\begin{document}
\title{A characterization of regular partial cubes whose all convex cycles have the same lengths}

\footnotetext[1]{The work is partially supported by the National Natural Science Foundation of China (Grant No. 11571155, 12071194, 11961067).}

\author{Yan-Ting Xie$^1$, ~Yong-De Feng$^{2}$, ~Shou-Jun Xu$^1, \thanks{Corresponding author. E-mail address: ~shjxu@lzu.edu.cn (S.-J. Xu).}$}

\date{\small $^1$ School of Mathematics and Statistics, Gansu Center for Applied Mathematics, \\Lanzhou University,
Lanzhou, Gansu 730000, China\\
$^2$ College of Mathematics and Systems Science, Xinjiang University, Urumqi,\\ Xinjiang 830046, China}

\maketitle

\begin{abstract}
Partial cubes are graphs that can be isometrically embedded into hypercubes. Convex cycles play an important role in the study of partial cubes. In this paper, we prove that a regular partial cube is a hypercube (resp., a Doubled Odd graph, an even cycle of length $2n$ where $n\geqslant 4$) if and only if all its convex cycles are 4-cycles (resp., 6-cycles, $2n$-cycles). In particular, the partial cubes whose all convex cycles are 4-cycles are equivalent to almost-median graphs. Therefore, we conclude that regular almost-median graphs are exactly hypercubes, which generalizes the result by Mulder [J. Graph Theory, 4 (1980) 107--110]---regular median graphs are hypercubes.

\setlength{\baselineskip}{17pt}
{} \vskip 0.1in \noindent%
\textbf{Keywords:} Hypercubes; Partial cubes; Almost-median graphs; Regularity; Convex cycles.
\end{abstract}
\section{Introduction}
A graph is called a {\em partial cube} if it can be embedded into a hypercube isometrically. It was introduced by Graham and Pollak \cite{gp71} as a model for interconnection networks and later for other applications. We refer the readers to two books \cite{hik11, ik00} and a semi-survey \cite{o08} for research on partial cubes.

Regular graphs are graphs that each of its vertices has the same degree. Many important graph classes are subclasses of regular graphs, such that vertex-transitive graphs and distance-regular graphs, etc. Moreover, many models for interconnection networks can be understood as regular graphs, like hypercubes. Since partial cubes are graphs inheriting metric properties from hypercubes, finding the regular partial cubes is an interesting problem. For the general case, Klav\v zar and Shpectorov \cite{ks07} showed that the variety of regular partial cubes, even the variety of cubic ones, is too big to allow an explicit classification, so it is necessary to find the regular partial cubes under some given conditions. In 1992, Weichsel \cite{we92} studied the problem among distance-regular graphs and characterized all distance-regular partial cubes. For vertex-transitive graphs, Marc \cite{mar171} focused on cubic vertex-transitive graphs and gave the characterization of cubic vertex-transitive partial cubes. Then, Marc \cite{mar172} studied mirror graphs, a class of vertex-transitive partial cubes, and proved that the mirror graphs are equivalent to the Cayley graphs of a finite Coxeter group with canonical generators and the tope graphs of a reflection arrangement. Most recently, we studied this problem among Cayley graphs generated by transpositions and proved  that bubble sort graphs are the only graph class of partial cubes in these Cayley graphs \cite{xfx221}. We also characterized regular 2-arc-transitive partial cubes in \cite{xfx222}.

First, Let us define a class of graphs---Doubled Odd graphs. Denote $[n]$ as the set $\{1,2,\cdots,n\}$ and, for an integer $k\leqslant n$, ${[n] \choose k}$ as the set of all subsets with $k$ elements in $[n]$ (referred to as $k$-subsets). For $k\geqslant 1$, the {\em Doubled Odd graph}  \cite{bcn89}, denoted by $\tilde{O}_{k}$, is the bipartite graph with two parts  ${[2k-1] \choose k-1}$ and ${[2k-1] \choose k}$, where two vertices $u\in {[2k-1] \choose k-1}$ and $v\in {[2k-1] \choose k}$ are adjacent if and only if $u\subset v$. It is easy to see that $\tilde{O}_{k}$ is a $k$-regular partial cube.

In a general graph $G$, every cycle of $G$ can be represented as a symmetric difference of isometric cycles \cite{mar18}. For partial cubes, there is a stronger property: convex cycles in a partial cube form a basis for its cycle space \cite{hls13}. Therefore, convex cycles play an important role in the study of partial cubes. Klav\v zar and Shpectorov \cite{ks122} studied the density of convex cycles in partial cubes and obtained an Euler-type inequality related to the number of vertices and edges, isometric dimension, and lengths of every convex cycle.

%In the paper \cite{xfx222}, we proved a regular partial cube is 2-arc-transitive if and only if it is isomorphic to a hypercube, a Doubled Odd graph or an even cycle of length at least 8. However, we think the condition of 2-arc-transitivity is too strong and replaceable.

Motivated by the idea about convex cycles in partial cubes and the fact that all convex cycles in hypercubes (resp., Doubled Odd graphs) are 4-cycles (resp., 6-cycles), we obtain the following result in this paper.
\begin{thm}\label{thm:MainResult}
Let $G$ be a finite, regular partial cube other than $K_1$, $K_2$. Then
\begin{enumerate}
\item all convex cycles of $G$ are 4-cycles if and only if $G$ is isomorphic to a hypercube $Q_k$ ($k\geqslant 2$);
\item all convex cycles of $G$ are 6-cycles if and only if $G$ is isomorphic to a Doubled Odd graph $\tilde{O}_{k}$ ($k\geqslant 2$);
\item all convex cycles of $G$ are $2n$-cycles if and only if $G$ is isomorphic to an even cycle $C_{2n}$ ($n\geqslant 4$).
\end{enumerate}
\end{thm}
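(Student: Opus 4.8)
The plan is to prove each of the three statements by the same overall strategy: the "if" direction (hypercube/Doubled Odd/even cycle $\Rightarrow$ convex cycles all have the prescribed length) is either classical or easy, so the real content is the "only if" direction, where regularity plus a uniform convex-cycle length must force the global structure. I would set up the machinery of $\Theta$-classes (Djoković–Winkler relation) and the associated expansion/contraction theory for partial cubes, since these are the standard tools for "bootstrapping" local information into a global classification. The common engine is: pick a $\Theta$-class $F$, look at the two halves $G^+,G^-$ and the "expansion graph," and use the fact that every convex cycle either lies entirely in one half or crosses $F$ in exactly two edges; crossing convex cycles of length $2n$ then say something rigid about how $F$ is attached. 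Regularity is what will prevent the "collapse" that would otherwise allow many non-isomorphic examples (this is exactly the phenomenon Klav\v{z}ar–Shpectorov exploit to show cubic partial cubes are unclassifiable without extra hypotheses).

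For part (i), I would invoke the stated equivalence "all convex cycles are 4-cycles $\iff$ almost-median graph" and then prove directly that a regular almost-median graph is a hypercube, modelling the proof on Mulder's argument for median graphs: in an almost-median graph every interval looks locally cube-like, so one shows by induction on the degree $k$ that $G$ must be an expansion of a $(k-1)$-regular almost-median graph along a $\Theta$-class that is "perfect" (a perfect matching whose removal disconnects $G$ into two isomorphic convex halves), and that forces $G=Q_{k-1}\,\square\,K_2=Q_k$. The base case $k=2$ gives $C_4=Q_2$ (here regularity rules out longer induced even cycles because a $2n$-cycle with $n\ge 3$ would contribute a convex $2n$-cycle). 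For part (iii) the argument should be short: if $G$ is $k$-regular and has a convex $2n$-cycle $C$ with $n\ge 4$, I would show $k=2$ by arguing that any vertex of $C$ with an extra neighbor forces, via convexity of $C$ and the fact that convex cycles span the cycle space, either a shorter convex cycle or a convex cycle of length $\ne 2n$; hence $G$ is 2-regular and connected, i.e. a single even cycle, and it must be convex in itself, so $G=C_{2n}$.

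Part (ii) is where I expect the main obstacle, and it deserves the bulk of the work. The target $\tilde O_k$ has all convex cycles equal to 6-cycles, is $k$-regular, bipartite, and has a very specific "doubling" structure. I would try to recover it from a $k$-regular partial cube $G$ all of whose convex cycles are hexagons as follows: first show bipartiteness is automatic (partial cubes are bipartite); then analyze a single $\Theta$-class $F$ and show that the contraction $G/F$ is again a $(k-1)$-regular partial cube whose convex cycles are still all hexagons — the delicate point being that contracting must not create a 4-cycle, which one controls using the "no convex 4-cycle" hypothesis together with properties of the expansion. Running this down to small $k$ and identifying $\tilde O_2 = C_6$ as the base case, then checking that at each expansion step the expansion is forced to be the "full" (isometric, peripheral on both sides) one that reproduces $\tilde O_{k}$ from $\tilde O_{k-1}$, would finish it. The hard part is precisely proving that the $\Theta$-classes of $G$ must have this peripheral/convex structure on both sides — i.e., ruling out asymmetric or partial expansions — and I anticipate needing a careful local argument around a single vertex: its $k$ incident edges lie in $k$ distinct $\Theta$-classes, each pair of which spans a convex hexagon, and one must show these hexagons fit together exactly as the $k$-subsets/$(k-1)$-subsets incidence pattern of $\tilde O_k$. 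I would look for an invariant (e.g. counting hexagons through an edge, or the structure of the "2-ball" around a vertex) that pins this down, and expect that regularity is used decisively to force all vertices to have isomorphic neighborhoods, after which a connectivity/induction argument assembles the global isomorphism with $\tilde O_k$.
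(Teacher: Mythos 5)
Your outline has genuine gaps in all three parts, and in part (ii) the proposed strategy is structurally unsound. For (ii) you plan an induction contracting one $\Theta$-class at a time, claiming $G/F$ is again a $(k-1)$-regular partial cube with all convex cycles hexagons and that the base case is $\tilde O_2=C_6$. This cannot work: $\tilde O_k$ has isometric dimension $2k-1$ while $\tilde O_{k-1}$ has $2k-3$, so a single contraction can never carry one to the other; and already contracting a $\Theta$-class of $C_6$ produces a $4$-cycle, so the property ``all convex cycles are hexagons'' is not preserved — the ``delicate point'' you flag is in fact false. The paper takes a completely different route: it quotes a prior characterization ($G\cong\tilde O_k$ iff all convex cycles are $6$-cycles \emph{and} every $3$-path lies in a $6$-cycle) and then shows the two conditions are equivalent for regular partial cubes of girth $6$, via an extremal argument on ``type-II'' paths (take a longest such path that is forced to be a geodesic, extend it by one edge, and derive a contradiction from regularity). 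For (iii), your claim that a vertex of the convex $2n$-cycle with an extra neighbour ``forces a shorter convex cycle or one of length $\neq 2n$'' by a short local argument is exactly the content of Marc's theorem that there are no finite partial cubes of girth more than $6$ and minimum degree at least $3$ — an entire paper's worth of work, which the authors simply cite after observing that the girth equals $2n>6$ because shortest cycles are convex. Your sketch does not supply that argument.

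For (i), the paper does not use expansion/contraction at all. It defines ``type-I'' paths (paths along which consecutive $\Theta$-class neighbourhoods $\mathcal F(v_i)$ keep changing), proves every such path is a geodesic when all convex cycles are $4$-cycles, and then shows a longest type-I geodesic could be extended unless $\mathcal F(v)$ is the same for every vertex; this gives $\mathrm{idim}(G)=k$, whence $G$ embeds isometrically in the $k$-regular graph $Q_k$ and must equal it. Your plan to adapt Mulder's median-graph proof to almost-median graphs leaves the crux unaddressed: Mulder's argument leans on the convexity of the sets $U_{uv}$, which in an almost-median graph are only isometric, so the claim that $G$ is a ``perfect'' expansion of a $(k-1)$-regular almost-median graph is precisely what would need to be proved and is not. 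In all three parts the decisive lemmas are missing from your proposal, so it does not constitute a proof.
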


A corollary of (i) of Theorem \ref{thm:MainResult} is about almost-median graphs, which is a natural generalization of median graphs. A graph $G$ is called a {\em median graph} if, for every three vertices $u, v, w$ of $G$, there exists a unique vertex, called the {\em median} of $u,v,w$, that lies on a shortest path between each pair of $u,v,w$ simultaneously. %Median graphs are one of the most important classes in metric graph theory.  (see \cite{bc08,km99,mu78,mu11} and Chapter 12 of the book \cite{hik11}).
Median graphs can also be characterized as partial cubes for which, for each edge $uv$, two associated  vertex subsets $U_{uv}$ and $U_{vu}$ are convex (see Section 2 for the detailed definition of $U_{uv}$). {\em Almost-median graphs} (resp., {\em semi-median graphs}) were introduced in \cite{ik98} as those partial cubes for whose all vertex  subsets $U_{uv}$ and  $U_{vu}$ mentioned above are isometric (resp., connected).

Let's denote $\mathcal{H}$, $\mathcal{M}$, $\mathcal{A}$, $\mathcal{S}$, $\mathcal{P}$ as the classes of hypercubes, median graphs, almost-median graphs, semi-median graphs and partial cubes, respectively. Then we have

\begin{equation*}
\mathcal{H}\subsetneq\mathcal{M}\subsetneq\mathcal{A}\subsetneq\mathcal{S}\subsetneq\mathcal{P}.
\end{equation*}

In the study of the relations between the five graph classes, Bre\v sar et al. \cite{bikms02} proved that a semi-median graph is a median graph if and only if it contains no $Q_3^-$ as a convex subgraph, where $Q_3^-$ is the graph obtained by removing one vertex from the cube graph $Q_3$. Bre\v sar \cite{b07} obtained that a semi-median graph is an almost-median graph if and only if all its convex cycles are 4-cycles.  Klav\v zar and Shpectorov extended Bre\v sar's result to the class of partial cubes, i.e.,
\begin{pro}{\em\cite{ks121}}\label{pro:AMG4Cycle}
A partial cube $G$ is an almost-median graph if and only if every convex cycle of $G$ is 4-cycle.
\end{pro}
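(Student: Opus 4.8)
The plan is to treat the two implications separately: the forward one is free from Bre\v sar's theorem, while the converse reduces, again via Bre\v sar's theorem, to proving that $G$ is semi-median, which is where all the work lies. For the forward direction, observe that an almost-median graph is semi-median (each $U_{uv}$, being isometric, is connected), so ``$G$ almost-median $\Rightarrow$ every convex cycle of $G$ is a $4$-cycle'' is precisely the forward implication of Bre\v sar's theorem \cite{b07} that a semi-median graph carrying a convex cycle of length $\geq 6$ is not almost-median; nothing new is needed there. For the converse I would assume every convex cycle of $G$ is a $4$-cycle and, by the other implication of Bre\v sar's theorem, show only that $U_{ab}$ is connected for every edge $ab$.

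To prove this last point I would argue by contraposition. Suppose $U_{ab}$ is disconnected, and among all pairs of vertices in distinct components of $U_{ab}$ choose $x,y$ with $d:=d_G(x,y)$ minimum; since an edge of $U_{ab}$ joins vertices of one component, $d\geq 2$. Fix a shortest $x$--$y$ path $P=x\,p_1\cdots p_{d-1}\,y$. As half-spaces of a partial cube are isometric, $P$ lies in $W_{ab}$, and minimality of $d$ forces $p_1,\dots,p_{d-1}\notin U_{ab}$ (an interior vertex of $P$ in $U_{ab}$ would be at distance $<d$ from both $x$ and $y$, hence in the same component as each, which is impossible). Let $x',y'\in W_{ba}$ be the $F_{ab}$-neighbours of $x,y$; by parallelism of $\Theta$-classes $d_G(x',y')=d$, and the partnering $u\mapsto u'$ maps components of $U_{ab}$ bijectively onto those of $U_{ba}$, so a shortest $x'$--$y'$ path $P'=x'\,q_1\cdots q_{d-1}\,y'$ lies in $W_{ba}$ with all $q_j\notin U_{ba}$. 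Then
$$C:=P\ \cup\ \{yy'\}\ \cup\ P'\ \cup\ \{x'x\}$$
is a cycle of $G$ of length $2d+2\geq 6$, and the goal is to extract from $C$ a convex cycle of length $\geq 6$, contradicting the hypothesis.

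To carry this out I would work inside a hosting hypercube $Q_n\supseteq G$. Every antipodal pair of $C$ has the form $\{p_i,q_{d-i}\}$ and hence straddles $F_{ab}$, so any geodesic of $G$ between such a pair crosses $F_{ab}$ exactly once, along an edge $uu'$ with $u\in U_{ab}\cap W_{ab}$. Chasing the $Q_n$-coordinates along such a geodesic and using the minimality of $d$ (with the partnering bijection) shows $u\in\{x,y\}$: otherwise $u$, or $u'$, would form with $x$ or with $y$ a pair of $U_{ab}$- (resp.\ $U_{ba}$-) vertices in distinct components at distance $<d$. If $P$ and $P'$ are convex paths this forces every such geodesic back onto $C$, so $C$ is convex; in general one either takes $P,P'$ convex from the outset or replaces $C$ by a suitable sub-cycle of the union of the relevant geodesics, again landing on a convex cycle of even length $\geq 6$. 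The case $d=2$ is a transparent model: $C$ is then the hexagon $x\,z\,y\,y'\,z'\,x'$ where $z$ is a common neighbour of $x,y$ (necessarily outside $U_{ab}$, else $x,y$ lie in one component) and $z'$ is the unique common neighbour of $x',y'$; since $z\notin U_{ab}$ the cube vertex $z\oplus e$, with $e$ the coordinate of $F_{ab}$, is absent from $V(G)$, a symmetric absence holds on the other side, and those absences are exactly what rule out any shortcut between antipodal vertices and make the hexagon convex.

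The main obstacle, as I see it, is precisely this last step — certifying that one really obtains a \emph{convex} cycle and not merely an isometric one. The subtlety is that $P$ and $P'$ need not a priori be convex paths, so an off-cycle geodesic between antipodal vertices of $C$ could in principle wander through $W_{ab}\setminus U_{ab}$ or $W_{ba}\setminus U_{ba}$; excluding this is where the minimality of $d$, the parallelism of $\Theta$-classes, and the hypothesis that every convex cycle is a $4$-cycle have to be combined carefully. An equivalent task one might prefer to attack head-on is to show that any two edges of a $\Theta$-class of $G$ are linked by a chain of edges in which consecutive ones are opposite edges of a convex cycle; under the hypothesis every such cycle is a $4$-cycle, and the $W_{ab}$-endpoints of the chain then trace a walk through $U_{ab}$, which gives its connectedness directly.
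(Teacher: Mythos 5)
First, note that the paper does not prove Proposition~\ref{pro:AMG4Cycle} at all: it is imported verbatim from Klav\v zar--Shpectorov \cite{ks121}. So your attempt can only be measured against the known proof in the literature and against the toolkit this paper sets up in Section~2. Your forward direction and your reduction of the converse (via Bre\v sar's theorem \cite{b07}) to ``every $U_{ab}$ is connected'' are both sound. The problem is that the one step carrying all the weight --- extracting a \emph{convex} cycle of length at least $6$ from a disconnection of $U_{ab}$ --- is not actually carried out, and the two concrete claims you lean on do not hold as stated. (1) The assertion that a geodesic between antipodal vertices of $C$ must cross $F_{ab}$ at $xx'$ or $yy'$ does not follow from minimality of $d$: for a geodesic from $p_i$ to $q_{d-i}$ crossing at $uu'$ one only gets $d(x,u)\leqslant d$ and $d(y,u)\leqslant d$, never the strict inequality $<d$ needed to contradict the minimal choice, so $u$ may well be a third vertex of $U_{ab}$. (2) Even in your $d=2$ ``model'', the hexagon $xzyy'z'x'$ need not be convex: if $x$ and $y$ have a second common neighbour $w\neq z$ (which nothing forbids, since $g(G)=4$ here), then $xwy$ is an off-cycle geodesic between vertices of $C$. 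The fallback ``replace $C$ by a suitable sub-cycle of the union of the relevant geodesics'' is not an argument; it is exactly the point at issue, since an isometric cycle in a partial cube can fail badly to be convex.

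The remark in your final sentence is in fact the right proof, but it rests on a theorem you would have to prove or cite: between any two edges in relation $\Theta$ there exists a \emph{convex traverse} (Klav\v zar--Shpectorov \cite{ks122}; see Definition~\ref{def:Traverse} and Lemma~\ref{lem:GeodesicandTraverse} of the present paper). Granting that, the converse is immediate and you do not even need Bre\v sar: for edges $xx',yy'\in F_{ab}$ take a convex traverse $T=(C_1,\dots,C_m)$ from $xx'$ to $yy'$; by hypothesis every $C_i$ is a $4$-cycle, hence contains exactly two (opposite) edges of $F_{ab}$, so every vertex of the $x,y$-side of $T$ is incident with an edge of $F_{ab}$; that side is a geodesic lying entirely in $U_{ab}$, so $U_{ab}$ is isometric (not merely connected) and $G$ is almost-median. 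I would recommend restructuring the proof around that lemma rather than around the ad hoc cycle $C$.
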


About the relation between $\mathcal{H}$ and $\mathcal{M}$,  Mulder \cite{mu80a} proved that a median graph $G$ is regular if and only if $G$ is a hypercube in 1980s. Combined with (i) of Theorem \ref{thm:MainResult} and Proposition \ref{pro:AMG4Cycle}, we obtain the following result, which is a generalization of Mulder's result.
\begin{thm}\label{cor:RegularAMGisnCube}
A finite almost-median graph $G$ is $k$-regular if and only if $G\cong Q_k$.
\end{thm}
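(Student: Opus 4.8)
The plan is to obtain Theorem~\ref{cor:RegularAMGisnCube} as an immediate consequence of Proposition~\ref{pro:AMG4Cycle} together with part~(i) of Theorem~\ref{thm:MainResult}, the only extra work being a little bookkeeping for the two degenerate hypercubes $Q_0=K_1$ and $Q_1=K_2$, which are excluded from the hypothesis of Theorem~\ref{thm:MainResult} but are genuine almost-median graphs.

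For the forward implication, let $G$ be a finite almost-median graph that is $k$-regular. By definition an almost-median graph is a partial cube, hence connected. If $k\in\{0,1\}$, then connectedness forces $G=K_1$ or $G=K_2$, that is $G\cong Q_0$ or $G\cong Q_1$, and we are done. If $k\geqslant 2$, then $G$ is a regular partial cube distinct from $K_1$ and $K_2$; by Proposition~\ref{pro:AMG4Cycle} every convex cycle of $G$ is a $4$-cycle, so Theorem~\ref{thm:MainResult}(i) applies and gives $G\cong Q_m$ for some $m\geqslant 2$. Comparing degrees, $Q_m$ is $m$-regular while $G$ is $k$-regular, so $m=k$, i.e.\ $G\cong Q_k$.

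For the backward implication one only needs the standard facts that $Q_k$ is $k$-regular and that every hypercube is a median graph, hence an almost-median graph; both are classical and will simply be cited. I do not expect any real obstacle here: all the analytic content has been absorbed into Theorem~\ref{thm:MainResult}(i), whose proof is the heart of the paper, and into Proposition~\ref{pro:AMG4Cycle} quoted from \cite{ks121}. The single point deserving attention is precisely the compatibility with the small cases $K_1=Q_0$ and $K_2=Q_1$, which the connectedness of partial cubes settles.
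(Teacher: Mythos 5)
Your proposal is correct and follows exactly the route the paper intends: Theorem~\ref{cor:RegularAMGisnCube} is stated there as an immediate consequence of Proposition~\ref{pro:AMG4Cycle} combined with part~(i) of Theorem~\ref{thm:MainResult}, with no separate proof given. Your explicit treatment of the degenerate cases $k\in\{0,1\}$ (i.e.\ $Q_0=K_1$ and $Q_1=K_2$, which are excluded from the hypotheses of Theorem~\ref{thm:MainResult}) is a small but welcome addition of care rather than a different approach.
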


The paper is organized as follows. In Section 2, we introduce some terminology and lemmas. In Section 3, we prove Theorem \ref{thm:MainResult}. In Section 4, we conclude the paper by suggesting some future problems.

\section{Preliminaries}
In this paper all graphs are undirected, finite and simple. Let $G$ be a graph with vertex set $V(G)$ and edge set $E(G)$. For a vertex $v \in V(G)$, the degree of $v$, denoted by $\mathrm{deg}(v)$, is the number of the neighbors of $v$ in $G$. If $\mathrm{deg}(v)=k$  for all $v\in V(G)$, we say $G$ is {\em $k$-regular}.

The {\em length} of a path (resp., a cycle) is its number of edges. A path (resp., a cycle) of length $l$ is called $l$-path (resp., $l$-cycle). The length of a shortest cycle in $G$ is called the {\em girth} of $G$, denoted by $g(G)$. If $G$ is acyclic, we denote $g(G)=\infty$. Let $P=v_0v_1\cdots v_n$ be a path and $C=u_0u_1\cdots u_mu_0$ a cycle of $G$. Denote $v_iPv_j$ ($0\leqslant i<j\leqslant n$) (resp., $u_iu_{i+1}Cu_j$ ($0\leqslant i,j\leqslant m$)) as the subpath $v_iv_{i+1}\cdots v_j$ (resp., $u_iu_{i+1}\cdots u_j$) on $P$ (resp., $C$).

For $u, v\in V(G)$, the {\em distance} $d_{G}(u,v)$ (we will drop the subscript $G$ if no confusion can occur) is the length of a shortest path between $u$ and $v$ in $G$.  We call a shortest path from $u$ to $v$ a $u,v$-{\em geodesic}. A subgraph $H$ of $G$ is called {\em isometric} if for any $u,v\in V(H)$, $d_H(u,v)=d_G(u,v)$, and further, if for any $u,v\in V(H)$, all $u,v$-geodesics are contained in $H$, we call $H$ a {\em convex} subgraph of $G$. Obviously, the convex subgraph of $G$ is isometric, and the isometric subgraph of $G$ is induced and connected.

A {\em hypercube of dimension $n$} (or {\em $n$-cube} for short), denoted by $Q_n$, is a graph whose vertex set corresponds to the set of 0-1 sequences $a_1a_2\cdots a_n$ with $a_i\in \{0,1\}$, $i=1, 2, \cdots, n$. Two vertices are adjacent if the corresponding 0-1 sequences differ in exactly one digit. A graph $G$ is called a {\em partial cube} if it is isomorphic to an isometric subgraph of $Q_n$ for some $n$.

The {\em Djokovi\'c-Winkler relation} (see \cite{dj73,w84}) $\Theta$ is a binary relation on $E(G)$ defined as follows: Let $e:=uv$ and $f:=xy$ be two edges in $G$, $e\,\Theta\,f\iff d(u,x)+d(v,y)\neq d(u,y)+d(v,x)$. If $G$ is bipartite, there is another equivalent definition of Djokovi\'c-Winkler relation: $e\,\Theta\,f\iff d(u,x)=d(v,y)$ and $d(v,x)=d(u,y)$. Winkler proved that

\begin{pro}{\em\cite{w84}}\label{pro:PartialCube}
A connected graph $G$ is a partial cube if and only if $G$ is bipartite and $\Theta$ is an equivalence relation on $E(G)$.
\end{pro}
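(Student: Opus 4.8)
The plan is to prove the two implications separately, with the reverse one carrying essentially all of the work.

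For the forward implication, suppose $G$ is a partial cube and fix an isometric embedding of $G$ into some $Q_n$. Since $Q_n$ is bipartite and an isometric subgraph is induced, $G$ is bipartite. In $Q_n$ two edges are $\Theta$-related exactly when they flip the same coordinate, so the relation $\Theta$ on $E(Q_n)$ has the $n$ coordinate directions as its classes and is in particular an equivalence relation. Because the embedding is isometric we have $d(u,v)=d_{Q_n}(u,v)$ for all $u,v\in V(G)$, so the defining (in)equality of $\Theta$ is computed by the same numbers in $G$ as in $Q_n$; hence $\Theta$ on $E(G)$ is exactly the restriction of $\Theta$ on $E(Q_n)$ to $E(G)$. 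A restriction of an equivalence relation to a set of edges is again an equivalence relation, which settles this direction.

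For the reverse implication, assume $G$ is connected and bipartite and that $\Theta$ is an equivalence relation on $E(G)$. For an edge $uv$ set $W_{uv}:=\{w\in V(G):d(w,u)<d(w,v)\}$; bipartiteness forbids $d(w,u)=d(w,v)$, so $V(G)=W_{uv}\sqcup W_{vu}$. Two elementary facts hold with no use of transitivity. First, if $e=uv$ and $f=xy$ satisfy $e\,\Theta\,f$, then one endpoint of $f$ lies in $W_{uv}$ and the other in $W_{vu}$; this is read off from the bipartite form $d(u,x)=d(v,y)$, $d(v,x)=d(u,y)$ of the relation. Second, no two edges on a common geodesic are $\Theta$-related: if $w_0\cdots w_d$ is a geodesic and $e_a=w_{a-1}w_a$, $e_b=w_{b-1}w_b$ with $a<b$, then $d(w_a,w_{b-1})=b-a-1\neq b-a+1=d(w_{a-1},w_b)$, so the bipartite criterion for $e_a\,\Theta\,e_b$ fails.

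The crux is to upgrade these facts, using that $\Theta$ is an equivalence relation, to the statement that each $\Theta$-class is a full edge cut: for every class $F$ and every edge $uv\in F$ one has $F=\partial W_{uv}$, so that $G-F$ has exactly the two components $W_{uv}$ and $W_{vu}$. The first fact already gives $F\subseteq\partial W_{uv}$; the reverse inclusion is equivalent to convexity of the halfspaces $W_{uv}$, and this is precisely where transitivity is needed. Concretely, a failure of convexity of $W_{uv}$ forces a geodesic with both ends in $W_{uv}$ to leave and re-enter it, producing two edges of $\partial W_{uv}$ on one geodesic; if every cut edge were $\Theta$-related to $uv$, transitivity would make these two edges $\Theta$-related, contradicting the second fact. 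Turning this into the clean equality $F=\partial W_{uv}$ is the main obstacle of the proof. Granting the cut property, enumerate the $\Theta$-classes $F_1,\dots,F_k$ and define $\phi:V(G)\to\{0,1\}^k$ by letting the $i$-th coordinate of $\phi(w)$ record which side of the cut $F_i$ the vertex $w$ lies on. An edge of $G$ lies in exactly one class, so adjacent vertices map to vertices of $Q_k$ differing in exactly one coordinate, and $\phi$ is a homomorphism into $Q_k$. For isometry, a geodesic of length $d(u,v)$ changes at most one coordinate per edge, giving Hamming distance $\le d(u,v)$; conversely, by the second fact the edges of a geodesic lie in pairwise distinct classes, hence flip $d(u,v)$ distinct coordinates none of which is flipped back, so the Hamming distance is $\ge d(u,v)$. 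Thus $\phi$ preserves distances, is in particular injective, and exhibits $G$ as an isometric subgraph of $Q_k$, completing the proof.
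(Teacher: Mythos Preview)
The paper does not prove this proposition; it is quoted from Winkler as a background result, so there is nothing in the paper to compare against and I will just assess your argument on its own terms.

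Your outline is the standard one, and both the forward direction and the final embedding step are fine. The trouble is exactly the passage you yourself flag as ``the main obstacle,'' namely $\partial W_{uv}\subseteq F$. Your sketch there runs in the wrong direction: to get a contradiction from a failure of convexity of $W_{uv}$ you invoke ``if every cut edge were $\Theta$-related to $uv$,'' which is precisely the inclusion you are trying to establish, so the argument as written is circular. In fact this inclusion is a direct computation that does not use transitivity at all. If $xy\in\partial W_{uv}$ with $x\in W_{uv}$ and $y\in W_{vu}$, bipartiteness gives $d(x,v)=d(x,u)+1$ and $d(y,u)=d(y,v)+1$; combined with $|d(x,u)-d(y,u)|=|d(x,v)-d(y,v)|=1$ this forces $d(x,u)=d(y,v)$ and $d(x,v)=d(y,u)$, i.e.\ $xy\,\Theta\,uv$. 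So $\partial W_{uv}=F_{uv}$ already in any bipartite graph. Where transitivity genuinely enters is one step later, in making your map $\phi$ well-defined and a homomorphism: you need the $\Theta$-classes to partition $E(G)$ so that each edge lies in exactly one class (hence flips exactly one coordinate), and you need $uv\,\Theta\,xy$ to force $\{W_{uv},W_{vu}\}=\{W_{xy},W_{yx}\}$ so that ``which side of $F_i$'' is unambiguous; this last point follows from $\partial W_{uv}=F_{uv}=F_{xy}=\partial W_{xy}$ together with the easy observation that each $W_{uv}$ is connected. With these corrections your isometry argument goes through as written.
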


Let $G$ be a partial cube. We call the equivalence class on $E(G)$ the {\em $\Theta$-class}. For $e=uv\in E(G)$, let $F_{uv}$ (it can also be written as $F_e$) be the $\Theta$-class containing $uv$. The {\em isometric dimension} of $G$, denoted by $\mathrm{idim}(G)$, is the smallest integer $n$ that $G$ can be embedded into $Q_n$ isometrically, which coincides with the number of $\Theta$-classes of $G$ \cite{dj73}. Denote $W_{uv}:=\{w\in V(G)|d(u,w)<d(v,w)\}$ and $U_{uv}:=\{w\in V(G)|w\in W_{uv}\mbox{ and }w\mbox{ is incident with an edge in}$ $F_{uv}\}$.

The following simple propositions about partial cubes will be useful:
\begin{pro}{\em\cite{mas09}}\label{pro:DWRelationonIsometricCycle}
Let $G$ be a graph and $C$ an isometric even cycle in it. Then each pair of antipodal edges of $C$ are in relation $\Theta$.
\end{pro}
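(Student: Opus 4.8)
The plan is to verify the conclusion straight from the distance definition of the Djokovi\'c--Winkler relation $\Theta$, without invoking any structural theory. Write $C = u_0 u_1 \cdots u_{2n-1} u_0$ and set $e_i := u_i u_{i+1}$ with indices read modulo $2n$, so that the antipodal partner of $e_i$ is $e_{i+n}$. By the rotational symmetry of a cycle it is enough to show $e_0 \,\Theta\, e_n$ for $e_0 = u_0 u_1$ and $e_n = u_n u_{n+1}$.

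The one ingredient I would use is that $C$ is isometric: for any two of its vertices $u_a, u_b$ we have $d_G(u_a,u_b) = d_C(u_a,u_b) = \min\{|a-b|,\, 2n-|a-b|\}$. Feeding the four pairs of endpoints that appear in the definition of $\Theta$ into this identity gives $d(u_0,u_n) = n$, $d(u_1,u_{n+1}) = n$, $d(u_0,u_{n+1}) = n-1$, and $d(u_1,u_n) = n-1$, whence $d(u_0,u_n)+d(u_1,u_{n+1}) = 2n \neq 2n-2 = d(u_0,u_{n+1})+d(u_1,u_n)$. By the definition $e \,\Theta\, f \iff d(u,x)+d(v,y) \neq d(u,y)+d(v,x)$ (with $e=uv$, $f=xy$), this says precisely that $e_0\,\Theta\, e_n$, and relabeling handles every antipodal pair.

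There is no real obstacle here; the only things to get right are the elementary arc-length bookkeeping on $C$ (deciding which of the two $u_a$--$u_b$ arcs is the shorter one) and the observation that isometry of $C$ is exactly what licenses replacing cycle-distances by $G$-distances in the defining inequality, so the argument needs nothing about $G$ beyond the presence of the isometric even cycle. (If one prefers, the same four distance values show $d(u_0,u_n)=d(u_1,u_{n+1})$ and $d(u_0,u_{n+1})=d(u_1,u_n)$, which is the bipartite-style form of $e_0\,\Theta\,e_n$; but the inequality version above is the one that applies verbatim to an arbitrary graph.)
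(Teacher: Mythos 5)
Your computation is correct: the four distances $d(u_0,u_n)=d(u_1,u_{n+1})=n$ and $d(u_0,u_{n+1})=d(u_1,u_n)=n-1$ follow from isometry of $C$, and $2n\neq 2n-2$ gives $e_0\,\Theta\,e_n$ directly from the definition. The paper itself states this proposition as a quoted result from the reference \cite{mas09} and supplies no proof, so there is nothing to compare against; your argument is the standard direct verification, and you are right to note that the inequality form of $\Theta$ makes it valid for an arbitrary graph, not just a bipartite one.
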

\begin{pro}{\em\cite{ik00}}\label{pro:DWRelationonGeodesic}
Let $G$ be a partial cube with a path $P$ in it. $P$ is a geodesic in $G$ if and only if no two distinct edges on it are in relation $\Theta$. In particular, two adjacent edges in a partial cube cannot be in relation $\Theta$.
\end{pro}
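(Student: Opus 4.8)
The plan is to exploit the defining property of a partial cube directly. I fix an isometric embedding of $G$ into a hypercube $Q_N$, so that every vertex $v$ becomes a binary string $v=(v^{(1)},\dots,v^{(N)})$ and every edge flips exactly one coordinate. I will first record the elementary fact (easily checked from the bipartite distance formula) that in $Q_N$ two edges are in relation $\Theta$ precisely when they flip the same coordinate, and then transfer this to $G$. Because the embedding is isometric, $d_G$ agrees with $d_{Q_N}$ on $V(G)$, and since $\Theta$ is defined purely through distances (using the bipartite form $e\,\Theta\,f\iff d(u,x)=d(v,y)$ and $d(v,x)=d(u,y)$ given in the excerpt), the relation $\Theta$ computed in $G$ coincides with the restriction to $E(G)$ of the relation $\Theta$ computed in $Q_N$. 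Hence for edges of $G$, $e\,\Theta\,f$ holds if and only if $e$ and $f$ flip the same coordinate of $Q_N$.

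With this dictionary in hand, the proposition becomes a counting statement. Write $P=v_0v_1\cdots v_k$ and let $c_t\in\{1,\dots,N\}$ be the coordinate flipped by the edge $e_t=v_{t-1}v_t$. The number of coordinates in which $v_0$ and $v_k$ differ equals the number of coordinates flipped an odd number of times along $P$, so
\[
d_G(v_0,v_k)=d_{Q_N}(v_0,v_k)=\#\{i:\ i\text{ is flipped an odd number of times}\}\ \le\ \#\{c_1,\dots,c_k\}\ \le\ k,
\]
where the first inequality holds because a coordinate flipped an odd number of times is in particular flipped at least once, and the second because there are only $k$ edges. Now $P$ is a geodesic exactly when $d_G(v_0,v_k)=k$, which forces both inequalities to be equalities; the second equality says that $c_1,\dots,c_k$ are pairwise distinct, i.e.\ no two edges of $P$ flip the same coordinate, i.e.\ no two edges are in relation $\Theta$. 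Conversely, if the $c_t$ are pairwise distinct then each coordinate that occurs is flipped exactly once (hence an odd number of times), so $d_G(v_0,v_k)=k$ and $P$ is a geodesic. This settles both implications at once.

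The final sentence then follows immediately. Given two adjacent edges $v_0v_1$ and $v_1v_2$, the vertices $v_0$ and $v_2$ are distinct (the graph is simple) and lie in the same part of the bipartition, so they are nonadjacent and $d(v_0,v_2)=2$; thus $v_0v_1v_2$ is a geodesic and the main equivalence shows $v_0v_1$ and $v_1v_2$ are not in relation $\Theta$.

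I expect the only delicate point to be the justification that the intrinsically defined relation $\Theta$ on $E(G)$ is the restriction of the corresponding relation on $E(Q_N)$; this is exactly where isometry of the embedding is essential, and it must be established before the coordinate bookkeeping can be used. An embedding-free alternative would be to prove that each $\Theta$-class is an edge cut whose deletion splits $G$ into the two halves $W_{uv}$ and $W_{vu}$ (this part follows cleanly from bipartiteness and the distance definition of $\Theta$) and then to argue that a shortest path crosses each such cut at most once. However, making the ``at most once'' step rigorous amounts to showing that $d(u,v)$ equals the number of $\Theta$-classes separating $u$ and $v$, whose nontrivial inequality rests in turn on the halfspaces being convex. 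The hypercube route sidesteps this difficulty entirely, and is therefore the one I would carry out.
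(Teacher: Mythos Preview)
The paper does not supply its own proof of this proposition; it is quoted as a known background fact with a citation to \cite{ik00}, so there is nothing to compare your argument against. Your proof is correct: the identification of $\Theta$ in $Q_N$ with ``flips the same coordinate'' is a routine Hamming-distance computation, the transfer to $G$ via the isometric embedding is exactly right (both the original and the bipartite definition of $\Theta$ are purely distance-based), and the parity/counting argument for the equivalence is clean. Your handling of the ``in particular'' clause is also fine, and your closing remarks accurately flag where an embedding-free argument would require more work.
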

\begin{pro}{\em\cite{xfx222}}\label{pro:ShortestCycleisConvex}
Let $G$ be a partial cube and $C$ a shortest cycle in it. Then $C$ is convex.
\end{pro}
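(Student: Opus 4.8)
The plan is to prove the proposition in two stages. First I would show that \emph{every} shortest cycle of $G$ is isometric; then I would bootstrap this, using that $\Theta$ is an equivalence relation on $E(G)$ (Proposition~\ref{pro:PartialCube}) and that antipodal edges of an isometric even cycle are $\Theta$-related (Proposition~\ref{pro:DWRelationonIsometricCycle}), to the convexity of the given shortest cycle $C$. Since $G$ is a partial cube it is connected and bipartite, so all its cycles are even; I may assume $G$ has a cycle and write $g:=g(G)=2k$ with $k\geqslant 2$.

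\emph{Stage 1: shortest cycles are isometric.} Suppose some shortest cycle $D$ is not isometric and, among all pairs $x,y\in V(D)$ with $d_G(x,y)<d_D(x,y)$, choose one with $d_G(x,y)$ minimum; let $Q$ be an $x,y$-geodesic of $G$. No interior vertex of $Q$ can lie on $D$: if $w$ were such a vertex then $d_G(x,w)+d_G(w,y)=d_G(x,y)<d_D(x,y)\leqslant d_D(x,w)+d_D(w,y)$, so $d_G(x,w)<d_D(x,w)$ or $d_G(w,y)<d_D(w,y)$, contradicting minimality. Hence $Q$ and a shortest $x,y$-arc of $D$ share only $x,y$ and, being distinct (as $|Q|<d_D(x,y)$), form a cycle of length $d_G(x,y)+d_D(x,y)<2\,d_D(x,y)\leqslant g$, contradicting that $g$ is the girth. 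So every shortest cycle of $G$, in particular $C$, is an isometric even cycle.

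\emph{Stage 2, reduction.} Suppose $C$ is not convex; among all pairs $u,v\in V(C)$ admitting a $u,v$-geodesic of $G$ not lying in $C$, choose one with $d_G(u,v)$ minimum, and let $P$ be such a geodesic. An interior vertex $w$ of $P$ on $C$ would split $P$ into geodesics $uPw,wPv$ shorter than $P$; by minimality neither $\{u,w\}$ nor $\{w,v\}$ is such a pair, so every $u,w$- and $w,v$-geodesic lies in $C$, whence $P\subseteq C$, a contradiction; thus $P$ has no interior vertex on $C$. Let $A,B$ be the two $u,v$-arcs of $C$ with $|A|\leqslant|B|$. Since $C$ is isometric, $|P|=d_G(u,v)=d_C(u,v)=|A|$, so $A\cup P$ is a cycle of length $2|A|\leqslant|A|+|B|=g$; as $g$ is the girth, equality forces $|A|=|B|=|P|=k$, so $u,v$ are antipodal on $C$, and $C':=A\cup P$ is a shortest cycle, hence isometric by Stage~1.

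\emph{Stage 2, the contradiction.} Let $e$ be the edge of the common arc $A$ at $u$, let $f$ be the edge of $B$ at $v$, and let $f'$ be the edge of $P$ at $v$. Since $u,v$ are antipodal on the even cycle $C$ and $f$ is the edge immediately following the arc $A$ in the cyclic order of $C$, the edge $f$ is antipodal to $e$ on $C$; likewise $f'$ is antipodal to $e$ on the even cycle $C'$. Proposition~\ref{pro:DWRelationonIsometricCycle} then gives $e\,\Theta\,f$ and $e\,\Theta\,f'$, hence $f\,\Theta\,f'$ by transitivity (Proposition~\ref{pro:PartialCube}). But the end of $f$ distinct from $v$ lies on $C$, while the end of $f'$ distinct from $v$ is an interior vertex of $P$ and so lies off $C$; thus $f\neq f'$, and $f,f'$ are distinct edges both incident with $v$, contradicting Proposition~\ref{pro:DWRelationonGeodesic}. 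This proves $C$ convex. The step I expect to require the most care is the identification of $e$'s antipodal edges on the two cycles---confirming that on $C$ it is the edge of $B$ at $v$ and on $C'$ the edge of $P$ at $v$---which is exactly where one uses that $A$ is a common arc of both shortest cycles and that $u,v$ are antipodal on each of them; the remainder is routine.
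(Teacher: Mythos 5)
Your proof is correct and complete: Stage 1 (girth cycles are isometric) is the standard girth argument, and Stage 2 correctly combines Proposition~\ref{pro:DWRelationonIsometricCycle} on the two isometric $2k$-cycles $C$ and $C'=A\cup P$ with transitivity of $\Theta$ to force two distinct edges at $v$ into relation $\Theta$, contradicting Proposition~\ref{pro:DWRelationonGeodesic}. The paper itself does not reprove this proposition (it is imported from \cite{xfx222}), but your argument is exactly the standard route and uses only the tools already stated in Section~2.
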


In what follows, we introduce some terminology and a lemma about partial cubes further, which originate from \cite{mar16}.
\begin{den}\cite{mar16}\label{def:Traverse}
Let $v_1u_1\,\Theta\, v_2u_2$ in a partial cube $G$ with $u_2\in U_{u_1v_1}$. Let $C_1,C_2,\cdots,C_n$ be a sequence of isometric cycles such that $u_1v_1$ lies in $C_1$, $u_2v_2$ lies in $C_n$, each pair $C_i$ and $C_{i+1}$ ($1\leqslant i\leqslant n-1$) intersects exactly one edge which is in $F_{u_1v_1}$ and all other pairs don't intersect. If the shortest path from $v_1$ to $v_2$ on the union of $C_1,C_2,\cdots,C_n$ is a shortest $v_1,v_2$-path in $G$, we call the sequence of cycles $T:=(C_1,C_2,\cdots,C_n)$ the {\em traverse} from $u_1v_1$ to $u_2v_2$. We call the shortest $v_1,v_2$-path (resp., $u_1,u_2$-path) on the union of $C_1,C_2,\cdots,C_n$ the {\em $v_1,v_2$-side} (resp., {\em $u_1,u_2$-side}) of the traverse. If all cycles on a traverse are convex, we call the traverse a {\em convex traverse}.
\end{den}
\begin{lem}{\em \cite{mar16}}\label{lem:GeodesicandTraverse}
Let $u_0v_0\,\Theta\, u_mv_m$ hold in partial cube $G$. If $P=u_0u_1u_2\cdots u_m$ is a geodesic, then either $P$ is the $u_0,u_m$-side of a convex traverse from $u_0v_0$ to $u_mv_m$, or there exists a convex cycle $C$ of the form $u_iw_{i+1}w_{i+2}\cdots w_{j-1}u_ju_{j-1}\cdots u_{i+1}u_i$ for some $0\leqslant i<j-1\leqslant m-1$ and the vertices $w_{i+1},w_{i+2},\cdots, w_{j-1}$ not on $P$.
\end{lem}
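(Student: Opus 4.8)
The plan is to induct on the length $m$ of the geodesic $P$, after first pinning down the geometry of the configuration. Write $F:=F_{u_0v_0}$ for the $\Theta$-class of $u_0v_0$. By Proposition~\ref{pro:PartialCube}, $F$ is a cut: it partitions $V(G)$ into $W_{u_0v_0}$ and $W_{v_0u_0}$, it is exactly the set of edges between the two sides, and $u_0v_0\,\Theta\,u_mv_m$ puts $u_0,u_m$ on one side and $v_0,v_m$ on the other. Since a geodesic contains at most one edge of any $\Theta$-class (Proposition~\ref{pro:DWRelationonGeodesic}) while both ends of $P$ lie in $W_{u_0v_0}$, the whole path $P$ stays in $W_{u_0v_0}$. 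The bipartite form of $\Theta$ gives $d(v_0,v_m)=d(u_0,u_m)=m$, so a $v_0,v_m$-geodesic $Q$ of length $m$ exists in $W_{v_0u_0}$. I would also record the standard fact that the edges of $F$ match $U_{u_0v_0}$ to $U_{v_0u_0}$ and induce an isomorphism between the two induced subgraphs; the vertices of $P$ lying in $U_{u_0v_0}$ (which include $u_0$ and $u_m$) are exactly the prospective crossings of the traverse and they cut $P$ into its prospective cells.

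Base case $m=1$: from $d(v_0,v_1)=1$ the four vertices form a $4$-cycle $u_0u_1v_1v_0u_0$, and I would show it is convex by ruling out an external common neighbour of an antipodal pair. Any such neighbour $x$ is adjacent to a vertex of $W_{u_0v_0}$ and to one of $W_{v_0u_0}$, so one of its two edges lies in the cut $F$; hence $x\in U_{u_0v_0}\cup U_{v_0u_0}$ and is matched by $F$ to a vertex already on the cycle, forcing $x$ onto the cycle, a contradiction. This yields the one-cell convex traverse, and the second alternative is vacuous since it requires $0\le i<j-1\le m-1=0$. For the inductive step I would first dispose of the easy case: if some interior vertex $u_k$ ($0<k<m$) lies in $U_{u_0v_0}$, it is $F$-matched to a $v_k$ with $u_0v_0\,\Theta\,u_kv_k\,\Theta\,u_mv_m$, and I apply the induction hypothesis to the shorter geodesics $u_0\cdots u_k$ and $u_k\cdots u_m$ and concatenate the outcomes; if either half returns a chord-cycle it already witnesses the second alternative for $P$, and otherwise I splice the two convex traverses.

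The genuine content is the single-cell case, where $u_1,\dots,u_{m-1}\notin U_{u_0v_0}$. Here I would form the candidate cell $C=u_0u_1\cdots u_mv'_m\cdots v'_1v_0u_0$ by attaching to $P$ the two $F$-edges $u_0v_0,u_mv_m$ and a $v_0,v_m$-geodesic $Q=v_0v'_1\cdots v'_m$, obtaining a cycle of length $2m+2$ with $P$ as one side and $Q$ as the other. The aim is to show that for a suitable choice of $Q$ the cycle $C$ is convex, so that $P$ is the $u_0,u_m$-side of the one-cell convex traverse $(C)$. If no choice of $Q$ makes $C$ convex, I would take a pair of vertices of $C$ joined by a geodesic $R$ whose interior avoids $C$, chosen so that the arc of $C$ it cuts off is shortest possible; minimality should force $R$ together with that arc to bound a convex cycle $C'$ that shares an arc with $C$. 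Because the single-cell assumption keeps the interior of the $P$-side out of $U_{u_0v_0}$, I would argue that $C'$ can be taken to meet $C$ along the $P$-side, giving $C'=u_iw_{i+1}\cdots w_{j-1}u_ju_{j-1}\cdots u_i$ with the $w$'s off $P$ --- precisely the second alternative.

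The hard part will be controlling on which side of $C$ the non-convexity is realised: a priori a minimal violation might attach to $Q$ rather than to $P$ and so produce a chord-cycle in the wrong (symmetric) shape. I expect to resolve this by using the freedom in the choice of $Q$, and if necessary a minimality hypothesis on $P$ among the geodesics of its $\Theta$-class: re-routing $Q$ around any obstruction on the $Q$-side leaves only obstructions anchored to the fixed side $P$, whose subtended arc then lies along $P$ by the single-cell assumption. A secondary, more clerical obstacle is the splicing bookkeeping --- checking that consecutive cells meet in exactly the single $F$-edge $u_kv_k$ and that the concatenated $v$-sides remain a global $v_0,v_m$-geodesic --- which I expect to follow from the convexity of each cell and from each $v$-side piece being a geodesic inside $W_{v_0u_0}$.
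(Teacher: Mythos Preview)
The paper does not prove this lemma at all: it is quoted verbatim from Marc~\cite{mar16} and used as a black box, so there is no ``paper's own proof'' to compare against. Your attempt is therefore an independent proof of a result the authors import.

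As a proof, your sketch has a genuine gap in the single-cell case. You form $C=u_0Pu_m\,v_m\,Q\,v_0\,u_0$ and, if $C$ is not convex, take a shortest violating geodesic $R$ to obtain a convex cycle $C'$ sharing an arc with $C$. The problem is exactly the one you flag: $C'$ need not sit on the $P$-side. A minimal shortcut can attach to the $Q$-side, or worse, run from a vertex of $P$ to a vertex of $Q$ through one of the $F$-edges $u_0v_0$ or $u_mv_m$, producing a convex cycle that straddles both sides and is not of the required form $u_iw_{i+1}\cdots w_{j-1}u_j\cdots u_i$. Your proposed fix --- re-route $Q$ around any $Q$-side obstruction --- does not terminate as stated: the new $Q$ yields a new $C$, which may have a new $Q$-side shortcut, and you give no decreasing invariant. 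Invoking ``a minimality hypothesis on $P$'' does not help here, because $P$ is fixed in the statement; you are not free to replace it. Marc's argument in \cite{mar16} handles this by a more careful analysis tied to the $\Theta$-class structure rather than by freely replacing $Q$.

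A secondary but real issue is the splicing step. If both halves return convex traverses $(C_1,\dots,C_r)$ and $(C'_1,\dots,C'_s)$, the concatenated $v$-side is indeed a geodesic (lengths add to $m=d(v_0,v_m)$), but the definition of a traverse demands that non-consecutive cells be disjoint. Two convex cycles from different halves could in principle meet (they both lie near the cut $F$), and you have not ruled this out; convexity of each cell alone does not forbid intersection.
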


\begin{rem}
$u_i$, $u_j$ are a pair of antipodal vertices of $C$ in Lemma \ref{lem:GeodesicandTraverse}. We say that $u_iPu_j$ is one {\em half} of $C$ and the other half of $C$ is not on $P$.
\end{rem}

\section{Proof of Theorem \ref{thm:MainResult}}

\subsection{Proof of (i) of Theorem \ref{thm:MainResult}}
Firstly, in order to prove (i) of Theorem \ref{thm:MainResult}, we need some new definitions. Let $G$ be a partial cube and $v$ a vertex in $G$. We define $\mathcal{F}(v)$ as the family of $\Theta$-classes which contain an edge incident with $v$, that is, $\mathcal{F}(v):=\{F_{e}|e\mbox{ is incident with }v\}$.

\begin{den}\label{def:faultypath}
Let $G$ be a partial cube, $P:=v_0v_1\cdots v_l$ ($l\geqslant 1$) a path in $G$ and $e_i:=v_{i-1}v_i$ for $1\leqslant i\leqslant l$. We call $P$ of {\em type-I} if it satisfies that %(a) $\mathcal{F}(v_1)\setminus \mathcal{F}(v_2)\neq\emptyset$, $\mathcal{F}(v_2)\setminus \mathcal{F}(v_1)\neq\emptyset$ if $l=2$, or (b)
$\mathcal{F}(v_0)\setminus \mathcal{F}(v_1)\neq\emptyset$, $\mathcal{F}(v_{l})\setminus \mathcal{F}(v_{l-1})\neq\emptyset$ and $F_{e_{i+1}}\not\in\mathcal{F}(v_{i-1})$, $F_{e_{i}}\not\in\mathcal{F}(v_{i+1})$ for $1\leqslant i\leqslant l-1$. %if $m>2$.
\end{den} 

Secondly, we obtain the following lemma about paths of type-I.

\begin{lem}\label{lem:faultyGeodesic}
Let $G$ be a partial cube whose all convex cycles are 4-cycles, $P:=v_0v_1\cdots v_l$ ($l\geqslant 1$) a path in $G$ and $e_i:=v_{i-1}v_i$ for $1\leqslant i\leqslant l$. If $P$ is of type-I, then $P$ is a geodesic.
\end{lem}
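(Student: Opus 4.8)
The plan is to argue by induction on $l$, the length of $P$. The base case $l=1$ is trivial since any single edge is a geodesic. For the inductive step, suppose $P = v_0v_1\cdots v_l$ is of type-I but, for contradiction, is not a geodesic. The key observation is that the type-I conditions are "local" (they involve only consecutive triples of vertices), so every subpath $v_iPv_j$ with the endpoint conditions inherited still behaves well; in particular I would first check that the subpaths $v_0Pv_{l-1}$ and $v_1Pv_l$ are of type-I (the interior conditions are immediate, and the endpoint conditions at $v_0$, $v_{l-1}$, $v_1$, $v_l$ need to be verified or the relevant one is part of the hypothesis), hence geodesics by the induction hypothesis. So $d(v_0,v_{l-1}) = l-1$ and $d(v_1,v_l)=l-1$, and the only way $P$ fails to be a geodesic is $d(v_0,v_l) = l-2$, i.e. $P$ is "almost" a geodesic with exactly one repeated $\Theta$-class.

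**Locating the offending $\Theta$-class and applying the traverse lemma**

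Since $P$ is not a geodesic, by Proposition~\ref{pro:DWRelationonGeodesic} there are two distinct edges $e_i \,\Theta\, e_j$ on $P$ with $i<j$; choosing them with $j-i$ minimal, the subpath $v_{i-1}Pv_j$ is a path whose only $\Theta$-coincidence is between its two end edges. I would then show $i=1$ and $j=l$: if, say, $i\geqslant 2$, then $v_1 \cdots v_i \cdots v_j$ would contain the coincidence $e_i\,\Theta\, e_j$, contradicting that $v_1Pv_l$ is a geodesic. Hence $e_1 \,\Theta\, e_l$, and $v_1Pv_{l-1}$ is a geodesic containing no edge of the class $F_{e_1}=F_{e_l}$. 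Now apply Lemma~\ref{lem:GeodesicandTraverse} with the endpoints relabelled appropriately to the geodesic $v_1 v_2 \cdots v_{l-1}$ together with the $\Theta$-related edges $e_1 = v_0v_1$ and $e_l = v_{l-1}v_l$ (so $v_0$ and $v_l$ play the roles of the "$v$" endpoints off the geodesic). The lemma gives one of two outcomes: either this geodesic is the side of a convex traverse from $e_1$ to $e_l$, or there is a convex cycle $C$ whose one half is a subpath $v_iPv_j$ of $v_1Pv_{l-1}$.

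**Deriving the contradiction from the $4$-cycle hypothesis**

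Both outcomes must be pushed to a contradiction using that every convex cycle is a $4$-cycle. In the convex-cycle case: $C$ is a convex cycle with one half equal to $v_i v_{i+1} \cdots v_j \subseteq v_1Pv_{l-1}$ and the other half off $P$; since $C$ is a $4$-cycle, this forces $j = i+2$, and $C = v_i v_{i+1} v_{i+2} w$ with $w\notin V(P)$. Then $v_i w$ and $w v_{i+2}$ are edges with $v_iw \,\Theta\, v_{i+1}v_{i+2}$ and $v_{i+1}v_i \,\Theta\, v_{i+2}w$; in particular $F_{e_{i+2}} = F_{v_iw} \in \mathcal{F}(v_i)$, which directly contradicts the type-I requirement $F_{e_{i+2}}\notin \mathcal{F}(v_i)$ (reading Definition~\ref{def:faultypath} with index $i+1$ in the role of the running index). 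In the convex-traverse case: the traverse consists of convex cycles $C_1,\dots,C_n$, all of which are $4$-cycles, glued along edges of $F_{e_1}$; I would trace through the structure to locate, in $C_1$ (the cycle containing $e_1 = v_0v_1$), a $4$-cycle $v_0 v_1 v_2' v_0'$ whose edge $v_1 v_2'$ lies in $F_{e_2}$ — forcing $F_{e_2} \in \mathcal{F}(v_0)$, again contradicting the type-I condition $F_{e_2}\notin \mathcal{F}(v_0)$ — unless $n=1$, in which case $C_1$ itself is a $4$-cycle with both $e_1,e_l$ on it, making $l = 2$ and giving directly $F_{e_2}=F_{e_1}\in\mathcal{F}(v_0)$... wait, more carefully, $n=1$ forces $d(v_1,v_{l-1})=0$ hence $l=2$, and then $v_0v_1v_2$ with $v_0v_1\,\Theta\,v_1v_2$ contradicts Proposition~\ref{pro:DWRelationonGeodesic} (adjacent edges cannot be $\Theta$-related).

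**Anticipated main obstacle**

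The delicate part is the careful bookkeeping in the convex-traverse case: extracting from the first $4$-cycle $C_1$ of the traverse the precise edge that witnesses $F_{e_2}\in\mathcal{F}(v_0)$, and making sure the indices line up so that this genuinely violates Definition~\ref{def:faultypath}. One must also be careful that the two $\Theta$-related edges $e_1$ and $e_l$ are correctly oriented relative to $W_{\cdot\cdot}$/$U_{\cdot\cdot}$ so that Lemma~\ref{lem:GeodesicandTraverse} applies verbatim, and that the subpaths to which the induction hypothesis is applied really do satisfy the type-I endpoint conditions — for instance $\mathcal{F}(v_{l-1})\setminus\mathcal{F}(v_{l-2})\neq\emptyset$ need not be part of the hypothesis and may require the observation that $F_{e_l}\in\mathcal{F}(v_{l-1})\setminus\mathcal{F}(v_{l-2})$ (using $F_{e_l}\notin\mathcal{F}(v_{l-2})$, which is the type-I condition at index $l-1$). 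Once these endpoint checks and the index alignment are handled, the $4$-cycle hypothesis collapses every case quickly.
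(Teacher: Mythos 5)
Your proposal is correct and follows essentially the same route as the paper: isolate a closest pair of $\Theta$-related edges on $P$ (you do this via induction on $l$ applied to the two subpaths $v_0Pv_{l-1}$ and $v_1Pv_l$, the paper by directly choosing $e_s\,\Theta\,e_t$ with $t-s$ minimal), apply Lemma~\ref{lem:GeodesicandTraverse} to the intermediate geodesic, and in both the convex-traverse case and the convex-cycle case read off an antipodal $\Theta$-relation inside a $4$-cycle that violates the type-I conditions. The only slip is the side remark that $n=1$ forces $d(v_1,v_{l-1})=0$ and $l=2$ --- a one-cycle traverse consisting of a single $4$-cycle gives $d(v_1,v_{l-1})=1$ and $l=3$ --- but this case is already subsumed by your main computation $C_1=v_0v_1v_2x$ with $v_1v_2\,\Theta\,v_0x$, so nothing is lost.
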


\begin{proof}
When $l=1$, it is trivial. Now we prove the lemma when $l\geqslant 2$.

By contradiction, assume $P$ is not a geodesic. By Proposition \ref{pro:DWRelationonGeodesic}, there exist integers $s,t$ such that $e_s\,\Theta\, e_t$ for $1\leqslant s<t\leqslant l$. Choose $s,t$ such that $t-s$ is as small as possible; then $t-s\neq 1$ by Proposition \ref{pro:DWRelationonGeodesic}. In fact, $t-s\neq 2$, otherwise $F_{e_{s}}\in\mathcal{F}(v_{s+1})$ and $F_{e_{s+2}}\in\mathcal{F}(v_{s})$, a contradiction with the definition of paths of type-I. Thus, $t-s\geqslant 3$ and $l\geqslant 4$. By the choice of $s,t$, $F_{e_i}\neq F_{e_j}$ for any $s+1\leqslant i<j\leqslant t-1$, so $P':=v_{s}Pv_{t-1}$ is a $v_{s},v_{t-1}$-geodesic by Proposition \ref{pro:DWRelationonGeodesic}. According to Lemma \ref{lem:GeodesicandTraverse}, we consider two cases.

\textbf{Case 1.}  $P'$ is the $v_{s},v_{t-1}$-side of a convex traverse $T$ from $e_s$ to $e_t$.

In this case, assume $T=(C_1, C_2, \cdots, C_{m})$. Then all of $C_i$ ($1\leqslant i\leqslant m$)  are 4-cycles and further $v_{s-1},v_{s},v_{s+1}$ are on $C_1$. Assume $C_1=v_{s-1}v_sv_{s+1}xv_{s-1}$. By Propositions \ref{pro:DWRelationonIsometricCycle} and \ref{pro:ShortestCycleisConvex}, $e_s\,\Theta\, xv_{s+1}$. Thus, $F_{e_{s}}\in\mathcal{F}(v_{s+1})$, a contradiction with the definition of paths of type-I.

\textbf{Case 2.}  There exists a convex cycle $C$ such that exactly one half of $C$ is on $P'$.

Since $C$ is convex, $C$ is a 4-cycle. Assume $C=v_{p-1}v_pv_{p+1}yv_{p-1}$ for some $s+1\leqslant p\leqslant t-2$.  Then $e_p\,\Theta\, v_{p+1}y$, $e_{p+1}\,\Theta\, v_{p-1}y$. Thus, $F_{e_{p}}\in\mathcal{F}(v_{p+1})$, $F_{e_{p+1}}\in\mathcal{F}(v_{p-1})$, a contradiction with the definition of paths of type-I.
\end{proof}

Finally, we prove (i) of Theorem \ref{thm:MainResult}.

\begin{proof}[Proof (i) of Theorem \ref{thm:MainResult}]
{\em Sufficiency.} Since $Q_k$ is an almost-median graph, every convex cycle of $Q_k$ is a 4-cycle by Proposition \ref{pro:AMG4Cycle}.

{\em Necessity.} Assume $G$ is $k$-regular. Since $G$ is not $K_1$ or $K_2$, we obtain that $k\geqslant 2$. When $k=2$, it is trivial. Now, we prove the case of $k\geqslant 3$. First, we prove $\mathrm{idim}(G)=k$.

Let $v$ be a vertex in $G$. By Proposition \ref{pro:DWRelationonGeodesic}, $|\mathcal{F}(v)|=\mathrm{deg}(v)=k$, so there are at least $k$ different $\Theta$-classes and further $\mathrm{idim}(G)\geqslant k$. By contradiction, assume $\mathrm{idim}(G)>k$, that is, there are at least $k+1$ different $\Theta$-classes. In this case, there must exist two vertices $v_0,v_1$ such that $\mathcal{F}(v_0)\neq\mathcal{F}(v_1)$. Since $G$ is connected, we can assume that $v_0,v_1$ are adjacent. Since $|\mathcal{F}(v_0)|=|\mathcal{F}(v_1)|=k$, we have $\mathcal{F}(v_0)\setminus \mathcal{F}(v_1)\neq\emptyset$, $\mathcal{F}(v_0)\setminus \mathcal{F}(v_1)\neq\emptyset$, and further the path $v_0v_1$ is a path of type-I of length 1. Therefore, paths of type-I must exist in $G$.

By Lemma \ref{lem:faultyGeodesic}, every path of type-I is a geodesic. Let $P:=v_0v_1\cdots v_l$ ($l\geqslant 1$) be a longest geodesic of type-I in $G$ and $e_i:=v_{i-1}v_i$ for $1\leqslant i\leqslant l$. Since $G$ is finite, $l$ is finite. By the definition of paths of type-I, there exists an edge $e$ incident with $v_l$ where $F_e\not\in\mathcal{F}(v_{l-1})$. Let the other endpoint of $e$ be $v_{l+1}$. Then $v_{l+1}\neq v_i$ ($0\leqslant i\leqslant l-2$) because $P$ is a geodesic. By our hypothesis, the path $v_0v_1\cdots v_lv_{l+1}$ is not of type-I, then either $\mathcal{F}(v_{l+1})=\mathcal{F}(v_l)$, or $\mathcal{F}(v_{l+1})\neq\mathcal{F}(v_l)$ but $F_{e_{l}}\in\mathcal{F}(v_{l+1})$. Whichever the case is, $F_{e_{l}}\in\mathcal{F}(v_{l+1})$. Let $e'$ be the edge incident with $v_{l+1}$ such that $e'\,\Theta\, e_{l}$, and $v_{l+2}$ the other endpoint of $e'$.  Then $d(v_{l-1},v_{l+2})=d(v_l,v_{l+1})=1$, that is, $v_{l-1},v_l,v_{l+1},v_{l+2}$ compose a 4-cycle and further $e\,\Theta\, v_{l-1}v_{l+2}$, yielding the contradiction of $F_e\not\in\mathcal{F}(v_{l-1})$. Thus, for all $v\in V(G)$, $\mathcal{F}(v)$ are equal, and further $\mathrm{idim}(G)=k$.

Thus, $G$ can be embedded in $Q_k$ isometrically. Since $Q_k$ is also $k$-regular, $G\cong Q_k$.
\end{proof}
\subsection{Proof of (ii) of Theorem \ref{thm:MainResult}}
Summarizing (ii) of Theorem 3.2 in \cite{xfx222} and its proof, we obtain the following lemma:
\begin{lem}{\em\cite{xfx222}}\label{lem:TwoConditions}
Let $G$ be a finite, $k$-regular partial cube with $g(G)=6$. Then $G\cong \tilde{O}_k$ if and only if the following statements hold:
\begin{enumerate}
\renewcommand{\labelenumi}{\em (\alph{enumi})}
\item all convex cycles of $G$ are 6-cycles;
\item every 3-path of $G$ is in a 6-cycle.
\end{enumerate}
\end{lem}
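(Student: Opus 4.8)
The plan is to prove the two implications separately. The forward direction I would settle by a direct verification inside $\tilde O_k$: identifying each vertex with the characteristic vector of its defining subset realizes $\tilde O_k$ as the isometric subgraph of $Q_{2k-1}$ induced on the two middle levels $\binom{[2k-1]}{k-1}$ and $\binom{[2k-1]}{k}$, so every edge adds or deletes a single element and the $2k-1$ $\Theta$-classes are the coordinate directions. Statement (a) is then exactly the fact, recalled in the introduction, that all convex cycles of a Doubled Odd graph are $6$-cycles. For (b), using the bipartition-swapping automorphism $A\mapsto[2k-1]\setminus A$ I may assume a given $3$-path begins at a $(k-1)$-set, hence has the form $A,\ A\cup\{a\},\ (A\cup\{a\})\setminus\{b\},\ ((A\cup\{a\})\setminus\{b\})\cup\{c\}$ with $b\in A$ and $c\notin(A\cup\{a\})\setminus\{b\}$; appending $(A\setminus\{b\})\cup\{c\}$ and $A\cup\{c\}$ closes it into a $6$-cycle.

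For the converse I would first extract two facts from $g(G)=6$. Since $G$ has no $4$-cycle, if some $\Theta$-class $F\neq F_{uv}$ had an edge at $u$ and an edge at $v$ for an edge $uv$, those two edges would be in relation $\Theta$ and the bipartite description of $\Theta$ would force a $4$-cycle through $u,v$; hence $\mathcal{F}(u)\cap\mathcal{F}(v)=\{F_{uv}\}$, and since $|\mathcal{F}(u)|=|\mathcal{F}(v)|=k$ by Proposition~\ref{pro:DWRelationonGeodesic} we get $|\mathcal{F}(u)\cup\mathcal{F}(v)|=2k-1$ for every edge $uv$. Also every $6$-cycle of $G$ is isometric, since a shortcut between two of its vertices would create a cycle shorter than $6$. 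Next I use hypothesis (b), which enters only here: for distinct neighbours $u,u'$ of a vertex $v$ and any edge $ux$ with $x\neq v$, the $3$-path $x\,u\,v\,u'$ lies in a $6$-cycle, which is isometric, so by Proposition~\ref{pro:DWRelationonIsometricCycle} its antipodal edges are in relation $\Theta$; the edge antipodal to $xu$ is incident with $u'$, hence $F_{xu}\in\mathcal{F}(u')$. A dimension count (both $\mathcal{F}(u)\setminus\{F_{uv}\}$ and $\mathcal{F}(u')\setminus\{F_{u'v}\}$ have $k-1$ elements, and $F_{uv}\notin\mathcal{F}(u')$) then gives $\mathcal{F}(u)\setminus\{F_{uv}\}=\mathcal{F}(u')\setminus\{F_{u'v}\}$, hence $\mathcal{F}(u)\cup\mathcal{F}(v)=\mathcal{F}(u')\cup\mathcal{F}(v)$; propagating this equality along the connected line graph of $G$, the set $\mathcal{F}(u)\cup\mathcal{F}(v)$ is the same for all edges and so equals the set of all $\Theta$-classes, whence $\mathrm{idim}(G)=2k-1$.

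Finally I would match the canonical isometric embedding $\iota\colon G\hookrightarrow Q_{2k-1}$ with the middle-levels model. Identify the coordinates of $Q_{2k-1}$ with the $\Theta$-classes, so that every vertex of $Q_{2k-1}$ is a subset of $[2k-1]$, and let $X,Y$ be the parts of $G$. For $u,u'\in X$ at distance $2$ through a common neighbour $y$ one has $\iota(u)\triangle\iota(u')=\{F_{uy},F_{u'y}\}$, which equals $\mathcal{F}(u)\triangle\mathcal{F}(u')$ by the previous step; since any two vertices of $X$ are joined by a walk whose consecutive vertices are at distance $2$, the set $\iota(u)\triangle\mathcal{F}(u)$ is a constant $\delta$ over $u\in X$, and likewise $\iota(v)\triangle\mathcal{F}(v)$ is a constant $\delta'$ over $v\in Y$, with $\delta'=[2k-1]\setminus\delta$ (evaluate on an edge). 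Flipping the coordinates outside $\delta$ turns $\iota$ into an isometric embedding $\iota'$ with $\iota'(u)=[2k-1]\setminus\mathcal{F}(u)$, a $(k-1)$-set, for $u\in X$ and $\iota'(v)=\mathcal{F}(v)$, a $k$-set, for $v\in Y$; thus $\iota'(G)$ is an isometric subgraph of the middle-levels graph $\tilde O_k$, and since $G$ is $k$-regular while $\tilde O_k$ is connected and $k$-regular, no edge of $\tilde O_k$ can leave $\iota'(V(G))$, so $\iota'(V(G))=V(\tilde O_k)$ and $G\cong\tilde O_k$. The main obstacle is the step where (b) is used: everything hinges on wringing from (b), via the ``antipodal edges of an isometric $6$-cycle are $\Theta$-related'' mechanism, both the dimension equality $\mathrm{idim}(G)=2k-1$ and the mutual compatibility of all the sets $\mathcal{F}(v)$; by comparison the forward direction and the coordinate bookkeeping of the last paragraph are routine.
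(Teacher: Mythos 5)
Your proposal cannot be compared against a proof in this paper, because the paper does not prove Lemma~\ref{lem:TwoConditions} at all: it is imported verbatim from the authors' earlier work \cite{xfx222} (``Summarizing (ii) of Theorem 3.2 in \cite{xfx222} and its proof\ldots''). What you have written is therefore a genuinely self-contained alternative. I checked it and the converse direction is sound: the girth-$6$ argument that $\mathcal{F}(u)\cap\mathcal{F}(v)=\{F_{uv}\}$ for every edge $uv$ is correct (the bipartite form of $\Theta$ forces either a $4$-cycle or the two edges to coincide), every $6$-cycle in a girth-$6$ graph is indeed isometric (it is a shortest cycle, so Proposition~\ref{pro:ShortestCycleisConvex} even gives convexity), the antipodal-edge argument via Proposition~\ref{pro:DWRelationonIsometricCycle} correctly yields $\mathcal{F}(u)\setminus\{F_{uv}\}=\mathcal{F}(u')\setminus\{F_{u'v}\}$ for neighbours $u,u'$ of $v$, the propagation gives $\mathrm{idim}(G)=2k-1$, and the coordinate bookkeeping that translates the canonical embedding onto the two middle levels, followed by the ``$k$-regular induced subgraph of a connected $k$-regular graph is the whole graph'' finish, all works. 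Notably, your converse uses only hypothesis (b) (plus girth $6$ and regularity), which is consistent with the paper's Lemma~\ref{lem:EquivalentConditions}. It is also a quite different mechanism from what this paper does elsewhere with similar statements (extremal ``type-II'' geodesics rather than an explicit embedding), and it has the advantage of producing the isomorphism constructively.

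The one genuine gap is in the forward direction: you dispose of statement (a) by appealing to ``the fact, recalled in the introduction, that all convex cycles of a Doubled Odd graph are $6$-cycles.'' That fact is only stated as motivation in the introduction (and is itself proved in \cite{xfx222}), so in a self-contained proof it still needs an argument. It is easily supplied: having verified (b) for $\tilde{O}_k$, a convex cycle $C$ of length at least $8$ would contain a $3$-path which is a geodesic between its endpoints and lies in some $6$-cycle; the other half of that $6$-cycle is a second geodesic between the same endpoints, so convexity of $C$ forces $C$ to equal the $6$-cycle, a contradiction (this is exactly the (b)$\Rightarrow$(a) argument of Lemma~\ref{lem:EquivalentConditions}). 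Alternatively, a direct count of geodesics between antipodal vertices of a longer isometric cycle in the middle-levels model works. One further triviality: in your description of a $3$-path of $\tilde{O}_k$ you should also require $c\neq b$, or the fourth vertex coincides with the second.
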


In fact, as the following lemma shows, the statements (a) and (b)  are equivalent under the conditions of Lemma \ref{lem:TwoConditions}. %By these two lemmas, (ii) of Theorem \ref{thm:MainResult} can obtain  directly.
\begin{lem}\label{lem:EquivalentConditions}
Let $G$ be a finite, $k$-regular partial cube with $g(G)=6$. Then the following statements are equivalent:
\begin{enumerate}
\renewcommand{\labelenumi}{\em (\alph{enumi})}
\item all convex cycles of $G$ are 6-cycles;
\item every 3-path of $G$ is in a 6-cycle.
\end{enumerate}
\end{lem}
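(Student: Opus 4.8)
The plan is to prove the two implications of the equivalence separately; (b) $\Rightarrow$ (a) is routine and (a) $\Rightarrow$ (b) is the substantive half.

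\textbf{(b) $\Rightarrow$ (a).} Suppose every $3$-path of $G$ lies in a $6$-cycle and let $C$ be a convex cycle. Since $g(G)=6$, $C$ has length at least $6$, so I only need to exclude $|C|=2n\geqslant 8$. Pick four consecutive vertices $v_0v_1v_2v_3$ on such a $C$. As $C$ is convex it is isometric, so $d_G(v_0,v_3)=d_C(v_0,v_3)=3$, and on $C$ the path $v_0v_1v_2v_3$ is the only $v_0,v_3$-geodesic (the complementary arc has length $2n-3\geqslant 5$). By (b) there is a $6$-cycle $C'$ through $v_0v_1v_2v_3$, and the half of $C'$ complementary to $v_0v_1v_2v_3$ is then a $v_0,v_3$-geodesic distinct from $v_0v_1v_2v_3$; by convexity of $C$ it must lie on $C$, a contradiction. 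Hence $|C|=6$.

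\textbf{(a) $\Rightarrow$ (b).} Let $P=v_0v_1v_2v_3$ be a $3$-path with $e_i=v_{i-1}v_i$, and suppose for contradiction that $P$ lies in no $6$-cycle. First $P$ is a geodesic: otherwise Proposition~\ref{pro:DWRelationonGeodesic} gives $e_1\,\Theta\,e_3$, hence $d(v_0,v_3)=d(v_1,v_2)=1$ and $v_0v_1v_2v_3v_0$ is a $4$-cycle, contradicting $g(G)=6$; in particular $d(v_0,v_3)=3$ and any two vertices of $P$ at distance $2$ have a unique common neighbour. Next I use the elementary observation that $P$ lies in a $6$-cycle precisely when $v_0$ has a neighbour $b\neq v_1$ with $d(b,v_3)=2$ (given such $b$, its common neighbour $a$ with $v_3$ gives the $6$-cycle $v_0v_1v_2v_3abv_0$; a $6$-cycle through $P$ plainly produces such a $b$), and, symmetrically, when $v_3$ has a neighbour $\neq v_2$ at distance $2$ from $v_0$. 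Our assumption therefore forces the rigidity $I(v_0,v_3)=\{v_0,v_1,v_2,v_3\}$: $P$ is the unique $v_0,v_3$-geodesic, and every neighbour $u\neq v_1$ of $v_0$ satisfies $d(u,v_3)=4$, whence $uv_0v_1v_2v_3$ is a geodesic (and symmetrically at $v_3$). Using the bipartite $\Theta$-test and uniqueness of common neighbours one checks $F_{e_2}\notin\mathcal F(v_0)\cup\mathcal F(v_3)$; also $F_{e_1}\in\mathcal F(v_3)$ would, via the same test, produce a neighbour of $v_3$ at distance $2$ from $v_0$ other than $v_2$, i.e.\ a $6$-cycle through $P$, so $F_{e_1}\notin\mathcal F(v_3)$, and likewise $F_{e_3}\notin\mathcal F(v_0)$.

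I claim $\mathcal F(v_0)\cap\mathcal F(v_3)\neq\emptyset$ — the crux, discussed below. Granting it, fix $F$ in this intersection; by the non-memberships just recorded $F\notin\{F_{e_1},F_{e_2},F_{e_3}\}$, so $P$ avoids $F$, and the two edges of $F$ meeting $v_0$ and $v_3$, say $v_0v_0'$ and $v_3v_3'$, satisfy $v_0v_0'\,\Theta\,v_3v_3'$ with $v_0'\neq v_1$, $v_3'\neq v_2$ lying on the far side of $F$ (in particular off $P$). Apply Lemma~\ref{lem:GeodesicandTraverse} to the geodesic $P$ with the $\Theta$-related edges $v_0v_0',v_3v_3'$. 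In the ``convex cycle'' outcome one obtains a convex cycle one half of which is a subpath $v_i\cdots v_j$ of $P$ with $0\leqslant i<j-1\leqslant 2$; the cases $(i,j)=(0,2)$ and $(1,3)$ give convex $4$-cycles, impossible as $g(G)=6$, so $(i,j)=(0,3)$ and this convex cycle is a $6$-cycle through $P$ — contradicting the choice of $P$. The ``convex traverse'' outcome is impossible as well: such a traverse is a chain $(C_1,\dots,C_n)$ of ($6$-)cycles for which $P$ is the side running through $v_0$ and $v_3$; since $v_0$ (resp.\ $v_3$) is incident to only the one $F$-edge $v_0v_0'\in C_1$ (resp.\ $v_3v_3'\in C_n$), tracing how $P$ passes through $C_1,\dots,C_n$ shows it traverses a full short arc of each $6$-cycle, so $|P|=2n$ is even, contradicting $|P|=3$ (and $v_0',v_3'\notin P$ excludes the degenerate $n=1$ possibility). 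In all cases we reach a contradiction, establishing (a) $\Rightarrow$ (b).

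Finally, proving $\mathcal F(v_0)\cap\mathcal F(v_3)\neq\emptyset$ is what I expect to be the main obstacle. I would argue by contradiction: if it fails, the $2k$ edges incident to $v_0$ or $v_3$ lie in $2k$ pairwise distinct $\Theta$-classes, which with $F_{e_2}$ give at least $2k+1$ distinct classes all incident to or separating $P$. I would then exploit $k$-regularity (each vertex meets exactly $k$ $\Theta$-classes, Proposition~\ref{pro:DWRelationonGeodesic}), the identity $\mathcal F(x)\cap\mathcal F(y)=\{F_{xy}\}$ for every edge $xy$ (a further common class would create a $4$-cycle), and the rigidity $I(v_0,v_3)=V(P)$ — under which each neighbour $u\neq v_1$ of $v_0$ prolongs $P$ to a length-$4$ geodesic whose leading $\Theta$-class is new, and dually at $v_3$ — and propagate this accounting of $\Theta$-classes outward through the neighbourhoods of $v_0,v_1,v_2,v_3$, forcing either arbitrarily long geodesics (impossible, as $G$ is finite) or an excess of $\Theta$-classes. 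I expect organising this propagation into a clean contradiction to be the technical heart; a tidier alternative, if it can be made to work, would be to bound $\mathrm{idim}(G)\leqslant 2k$ for such $G$ directly from the convex-cycle (Euler-type) inequality of Klav\v zar and Shpectorov, contradicting the $2k+1$ classes exhibited above.
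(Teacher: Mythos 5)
Your (b)$\Rightarrow$(a) argument is correct and is essentially the paper's (produce a 6-cycle through four consecutive vertices of a longer convex cycle and use convexity to force the two cycles to coincide). Your reduction in (a)$\Rightarrow$(b) is also sound as far as it goes: a 3-path $P=v_0v_1v_2v_3$ in no 6-cycle is a geodesic, $F_{e_1}\notin\mathcal F(v_3)$, $F_{e_3}\notin\mathcal F(v_0)$, $F_{e_2}\notin\mathcal F(v_0)\cup\mathcal F(v_3)$, and a common $\Theta$-class of $v_0$ and $v_3$ avoiding $P$ would indeed, via Lemma \ref{lem:GeodesicandTraverse}, yield either a forbidden convex 4-cycle, a parity contradiction (a traverse of 6-cycles has sides of even length, while $d(v_0,v_3)=3$), or a convex 6-cycle through $P$. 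The problem is the step you yourself flag as the crux: $\mathcal F(v_0)\cap\mathcal F(v_3)\neq\emptyset$ is asserted but not proved, and what you offer in its place is a plan, not an argument. Nothing at hand bounds the number of $\Theta$-classes near $P$: there is no available estimate of the form $\mathrm{idim}(G)\leqslant 2k$ at this stage (for $\tilde O_k$ one has $\mathrm{idim}=2k-1$, but that is only known \emph{after} the lemma is established, so invoking it is circular), and the Klav\v zar--Shpectorov convex-excess inequality is a global count that does not localize to the neighbourhood of a single 3-path. ``Propagating the accounting outward through the neighbourhoods of $v_0,\dots,v_3$'' is precisely where all the difficulty of the lemma sits, so as written the proof is incomplete.

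It is worth noting how the paper sidesteps this local claim entirely: it never works at a single 3-path. It defines type-II paths by the conditions $F_{e_i}\notin\mathcal F(v_{i+2})$, $F_{e_{i+2}}\notin\mathcal F(v_{i-1})$, proves (by the same traverse/half-cycle dichotomy you use) that every type-II path is a geodesic, and then takes a \emph{longest} type-II path $P=v_0\cdots v_l$ --- which may be much longer than 3. Regularity is used only at the far end, to find an edge $e$ at $v_l$ with $F_e\notin\mathcal F(v_{l-2})$; maximality of $P$ then forces $F_{e_{l-1}}\in\mathcal F(v_{l+1})$, which produces a 6-cycle through $v_{l-2}v_{l-1}v_lv_{l+1}$ whose antipodal-edge $\Theta$-relation contradicts the choice of $e$. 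Finiteness enters through the existence of a longest type-II path, not through a dimension count. Your purely local claim may well require an argument of comparable global depth; to repair your proof you would either have to supply that argument or restructure along the paper's extremal lines.
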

%In fact, in order to prove (ii) of Theorem \ref{thm:MainResult}, we only need to prove the following theorem:
\begin{comment}
\begin{thm}\label{thm:TwoConditions}
Let $G$ be a finite, $k$-regular partial cube with $g(G)=6$. Then the following statements are equivalent:
\begin{enumerate}
\renewcommand{\labelenumi}{\em (\alph{enumi})}
\item all convex cycles of $G$ are 6-cycles;
\item every 3-path of $G$ is in a 6-cycle;
\item $G\cong \tilde{O}_k$.
\end{enumerate}
\end{thm}
(a)$+$(b)$\Longleftrightarrow$(c) has been proved in \cite{xfx222} (in that paper, the condition (b) is replaced by `every 3-arc is in a 6-cycle', but it doesn't matter since the two conditions are equivalent in bipartite graphs), so we only need to prove (a)$\Longleftrightarrow$(b).
\end{comment}

Before proving Lemma \ref{lem:EquivalentConditions}, we give a definition of paths of type-II, similar to paths of type-I.

\begin{den}\label{def:ModFaultyPath}
Let $G$ be a partial cube, $P:=v_0v_1\cdots v_l$ ($l\geqslant 3$) a path in $G$ and $e_i:=v_{i-1}v_{i}$ for $1\leqslant i\leqslant l$. We call $P$ of {\em type-II} if it satisfies that $F_{e_i}\not\in\mathcal{F}(v_{i+2})$ and $F_{e_{i+2}}\not\in\mathcal{F}(v_{i-1})$ for $1\leqslant i\leqslant l-2$.
\end{den}

About paths of type-II, we obtain the following lemma. 
\begin{lem}\label{lem:MFPathareGeodesic}
Let $G$ be a partial cube whose all convex cycles are 6-cycles and $P:=v_0v_1\cdots v_l$ ($l\geqslant 3$) a path in $G$. If $P$ is a path of type-II, then $P$ is a geodesic.
\end{lem}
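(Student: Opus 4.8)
The plan is to mimic the proof of Lemma~\ref{lem:faultyGeodesic} (the type-I analogue) almost line by line, with ``$4$-cycle'' replaced by ``$6$-cycle'' and the type-I condition replaced by the type-II condition, which now constrains the $\Theta$-classes of edges two apart on $P$ rather than consecutive ones. So I argue by contradiction. If the type-II path $P=v_0v_1\cdots v_l$ is not a geodesic, then by Proposition~\ref{pro:DWRelationonGeodesic} there are indices $1\leqslant s<t\leqslant l$ with $e_s\,\Theta\,e_t$ (where $e_i=v_{i-1}v_i$ as in Definition~\ref{def:ModFaultyPath}); pick such a pair with $t-s$ minimal. Proposition~\ref{pro:DWRelationonGeodesic} excludes $t-s=1$. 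If $t-s=2$, then $F_{e_s}=F_{e_{s+2}}$, and since $e_{s+2}$ is incident with $v_{s+2}$ this gives $F_{e_s}\in\mathcal{F}(v_{s+2})$, contradicting the defining inequality $F_{e_s}\notin\mathcal{F}(v_{s+2})$ of a type-II path (applied with $i=s$, which is legitimate since $1\leqslant s\leqslant t-2\leqslant l-2$). Hence $t-s\geqslant 3$, so $l\geqslant 4$. By minimality of $t-s$, no two distinct edges of $P':=v_sPv_{t-1}$ are in relation $\Theta$, so $P'$ is a $v_s,v_{t-1}$-geodesic by Proposition~\ref{pro:DWRelationonGeodesic}; as $e_s=v_{s-1}v_s\,\Theta\,v_{t-1}v_t=e_t$, Lemma~\ref{lem:GeodesicandTraverse} applies to $P'$ with the distinguished edges $v_sv_{s-1}$ and $v_{t-1}v_t$, giving two cases.

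In Case~1, $P'$ is a side of a convex traverse $T=(C_1,C_2,\cdots,C_r)$ from $v_sv_{s-1}$ to $v_{t-1}v_t$; every $C_j$ is convex, hence a $6$-cycle. The key point is that the portion of the $6$-cycle $C_1$ lying on the side $P'$ is the $2$-path $v_sv_{s+1}v_{s+2}$, so $v_{s-1},v_s,v_{s+1},v_{s+2}$ all lie on $C_1$ and $v_{s+2}$ is incident with the edge of $C_1$ antipodal to $e_s$; that antipodal edge lies in $F_{e_s}$ by Proposition~\ref{pro:DWRelationonIsometricCycle}, whence $F_{e_s}\in\mathcal{F}(v_{s+2})$, again contradicting type-II with $i=s$ (when $r=1$ the same conclusion holds, the edge of $C_1$ antipodal to $e_s$ being $e_t$ itself and $v_{s+2}=v_{t-1}$). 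In Case~2, some convex cycle $C$ has exactly one half on $P'$; being convex it is a $6$-cycle, so this half is a $3$-path $v_pv_{p+1}v_{p+2}v_{p+3}$ with $s\leqslant p$ and $p+3\leqslant t-1\leqslant l$. By Proposition~\ref{pro:DWRelationonIsometricCycle} the edge of $C$ antipodal to $e_{p+1}=v_pv_{p+1}$ is $\Theta$-equivalent to $e_{p+1}$ and incident with $v_{p+3}$, and similarly the edge antipodal to $e_{p+3}=v_{p+2}v_{p+3}$ is incident with $v_p$; hence $F_{e_{p+1}}\in\mathcal{F}(v_{p+3})$ and $F_{e_{p+3}}\in\mathcal{F}(v_p)$, each contradicting the type-II condition with $i=p+1$. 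Since every case is impossible, $P$ is a geodesic.

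The step I expect to be the main obstacle is Case~1: one must carefully reconcile the index conventions of Definition~\ref{def:Traverse} and Lemma~\ref{lem:GeodesicandTraverse} with the vertices $v_{s-1},v_s,v_{s+1},v_{s+2}$ of $P$, and verify that a convex $6$-cycle in the traverse contributes a segment of length exactly $2$ to each of its two sides (the $6$-cycle being isometric, its two edges in $F_{e_s}$ are antipodal, and deleting them leaves two $2$-paths). Once this is established, stepping two edges along $P$ from $v_s$ lands precisely on an edge of $F_{e_s}$, which is exactly what the type-II condition forbids. The only other routine thing to monitor is that whenever the type-II condition is invoked the index $i$ lies in the admissible range $1\leqslant i\leqslant l-2$, which follows immediately from $t-s\geqslant 3$ and $t\leqslant l$.
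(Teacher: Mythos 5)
Your proof is correct and follows essentially the same route as the paper's: reduce to a minimal $\Theta$-related pair $e_s\,\Theta\,e_t$, rule out $t-s\leqslant 2$, and apply Lemma~\ref{lem:GeodesicandTraverse} to $v_sPv_{t-1}$, extracting in both cases an antipodal edge of a convex $6$-cycle that violates the type-II condition. The only cosmetic difference is that the paper first isolates the claim that no $3$-subpath of $P$ lies in a $6$-cycle and cites it in both cases (and excludes $t-s=2$ via $g(G)=6$), whereas you inline the same antipodal-edge argument directly.
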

\begin{proof}
Note that if all convex cycles of $G$ are 6-cycles, then $g(G)=6$ by Proposition \ref{pro:ShortestCycleisConvex}.

When $l=3$, $P$ is a geodesic by $g(G)=6$. Now, let's assume $l\geqslant 4$.

Firstly, we prove that no subpath of $P$ of length 3 is in a 6-cycle. By contradiction, assume the 3-path $v_iv_{i+1}v_{i+2}v_{i+3}$ ($0\leqslant i\leqslant l-3$) in a 6-cycle, denote as $v_iv_{i+1}v_{i+2}v_{i+3}yxv_i$. Then $v_{i+3}y\,\Theta\, v_{i+1}v_i$ and $v_{i}x\,\Theta\, v_{i+2}v_{i+3}$ by Propositions \ref{pro:DWRelationonIsometricCycle} and \ref{pro:ShortestCycleisConvex}, a contradiction with the definition of paths of type-II.

By contradiction, assume $P$ is not a geodesic. By Proposition \ref{pro:DWRelationonGeodesic}, there exist $e_s, e_t$ satisfying $e_s\,\Theta\,e_t$ for $1\leqslant s<t\leqslant l$. We select $s, t$ such that $e_s\,\Theta\,e_t$ and $t-s$ is as small as possible. If $t-s\leqslant 2$, it is a contradiction with Proposition \ref{pro:DWRelationonGeodesic} or $g(G)=6$. Thus, $t-s\geqslant 3$. By the choice of $s,t$, $F_{e_i}\neq F_{e_j}$ for any $s+1\leqslant i<j\leqslant t-1$, so $P':=v_{s}Pv_{t-1}$ is a $v_{s},v_{t-1}$-geodesic by Proposition \ref{pro:DWRelationonGeodesic}. According to Lemma \ref{lem:GeodesicandTraverse}, we consider two cases.

\textbf{Case 1.}  $P'$ is the $v_{s},v_{t-1}$-side of a convex traverse $T$ from $e_s$ to $e_t$.

Assume $T=(C_1, C_2, \cdots, C_{m})$ where $C_i$ is 6-cycle for any $1\leqslant i\leqslant m$. Then the subpath $v_{s-1}v_sv_{s+1}v_{s+2}$ is in $C_1$, a contradiction.

\textbf{Case 2.}  There exists a convex cycle $C$ such that exactly one half of $C$ is on $P'$.

Since $C$ is convex, it is a 6-cycle. The half of $C$ on $P'$ is a subpath of $P$ of length 3, a contradiction.

In conclusion, $P$ is a geodesic.
\end{proof}

%Summarizing the proof of (ii) of Theorem 3.2 in \cite{xfx222}, we obtain the following lemma:

Now, we prove of Lemma \ref{lem:EquivalentConditions}.
\begin{proof}[Proof of Lemma \ref{lem:EquivalentConditions}]
\begin{comment}
{\em Sufficiency.} It was proved that all convex cycles in a Doubled Odd graph are 6-cycles in \cite{xfx222}.

{\em Necessity.} Assume $G$ is a finite, $k$-regular partial cube $(k\geqslant 2)$. By Lemma \ref{lem:TwoConditions}, it is only needed to prove that every 3-path in $G$ is in a 6-cycle.
\end{comment}
%By definition of $\tilde{O}_k$, it is easy to see $\tilde{O}_2\cong C_6$. The theorem is true. 

%Reviewing the proof of (ii) of Theorem 3.2 in \cite{xfx222}, we can know that in order to prove $G\cong\tilde{O}_k$,  (In \cite{xfx222}, we proved `every 3-arc is in a 6-cycle'. The two claims are equivalent in bipartite graphs.

(b)$\Longrightarrow$(a). By contradiction, assume $G$ contains a convex cycle $C=v_0v_1v_2\cdots v_{m-1}v_0$ where $m\geqslant 8$ is an even number. By hypothesis, the 3-path $v_0v_1v_2v_3$ is in a 6-cycle, denoted by $C'=v_0v_1v_2v_3u_4u_5v_0$. Since $v_0v_1v_2v_3$ is in the convex cycle $C$, it is a $v_0,v_3$-geodesic. Then 3-path $v_0u_5u_4v_3$ is also a $v_0,v_3$-geodesic. Since $C$ is convex, we obtain $C=C'$, a contradiction. 

(a)$\Longrightarrow$(b). Assume $G$ is a finite, $k$-regular partial cube $(k\geqslant 2)$ with $g(G)=6$. When $k=2$, $G\cong C_6$ and it is trivial. Now, we set $k\geqslant 3$. By contradiction, assume $v_0v_1v_2v_3$ is a 3-path of $G$ not in any 6-cycles. Then $F_{v_0v_1}\not\in\mathcal{F}(v_3)$ and $F_{v_2v_3}\not\in\mathcal{F}(v_0)$, that is, $v_0v_1v_2v_3$ is a path of type-II. Thus, there exists at least one path of type-II in $G$. Since $G$ is finite and the paths of type-II are geodesics by Lemma \ref{lem:MFPathareGeodesic}, we can choose $P:=v_0v_1\cdots v_l$ ($l\geqslant 3$) as a longest path of type-II and set $e_i:=v_{i-1}v_{i}$ for $1\leqslant i\leqslant l$. Obviously, $l$ is finite. By the definition of paths of type-II, $F_{e_{l-2}}\not\in\mathcal{F}(v_l)$. And further $F_{e_{l-1}}\not\in\mathcal{F}(v_l)$ otherwise $v_{l-2},v_{l-1},v_l$ are in a 4-cycle, a contradiction with $g(G)=6$. Combined with the three conditions that $F_{e_{l-2}}\not\in\mathcal{F}(v_l)$, $F_{e_{l-1}}\not\in\mathcal{F}(v_l)$ and $|\mathcal{F}(v_{l-2})|=|\mathcal{F}(v_l)|=k$, there must exist an edge $e$ incident with $v_l$ besides $e_l$, which satisfies $F_e\not\in\mathcal{F}(v_{l-2})$. Let $v_{l+1}$ be the other endpoint of $e$. Then $v_{l+1}\neq v_i$ ($0\leqslant i\leqslant l-1$) since $P$ is a geodesic. Since $P$ is the longest path of type-II, the path $v_0v_1\cdots v_lv_{l+1}$ is not of type-II. By the definition of paths of type-II, we obtain that $F_{e_{l-1}}\in\mathcal{F}(v_{l+1})$. Let $e'$ be the edge incident with $v_{l+1}$ satisfying that $e_{l-1}\,\Theta\,e'$ and the other endpoint $v_{l+2}$. Since $P$ is a geodesic and $g(G)=6$, $v_{l+2}\neq v_i$ ($0\leqslant i\leqslant l$). Then $d(v_{l-2},v_{l+2})=d(v_{l-1},v_{l+1})=2$. Set the common neighbor of $v_{l-2},v_{l+2}$ is $v_{l+3}$. Similarly, $v_{l+3}\neq v_i$ ($0\leqslant i\leqslant l$ and $i\neq l-3$). Thus, $v_{l-2}v_{l-1}v_lv_{l+1}v_{l+2}v_{l+3}v_{l-2}$ is a 6-cycle, so it is convex by Proposition \ref{pro:ShortestCycleisConvex} and further $e\,\Theta\, v_{l+3}v_{l-2}$ by Proposition \ref{pro:DWRelationonIsometricCycle}, a contradiction with $F_e\not\in\mathcal{F}(v_{l-2})$. Therefore, every 3-path of $G$ is in a 6-cycle. 
\end{proof}

Finally, we prove (ii) of Theorem \ref{thm:MainResult}.
\begin{proof}[Proof of (ii) of Theorem \ref{thm:MainResult}]
It is obvious that $g(G)=6$ if all convex cycles of $G$ are 6-cycles or $G\cong \tilde{O}_k$, so (ii) can be obtained directly from Lemmas \ref{lem:TwoConditions} and \ref{lem:EquivalentConditions}.
\end{proof}
\subsection{Proof of (iii) of Theorem \ref{thm:MainResult}}
Marc obtained the following lemma.
\begin{lem}{\em\cite{mar16}}\label{lem:RegularPCwithGirth>6}
Let $\Gamma$ be a finite, regular partial cube with $g(\Gamma)>6$, Then $\Gamma$ is $K_1$, $K_2$ or even cycle $C_{2n}$ ($n\geqslant 4$).
\end{lem}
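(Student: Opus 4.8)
The plan is to reduce Lemma \ref{lem:RegularPCwithGirth>6} to the assertion that \emph{no finite $k$-regular partial cube with $k\geqslant 3$ has girth larger than $6$}, disposing of the small degrees by hand. If $\Gamma$ is $k$-regular with $k\leqslant 2$: for $k=0$, $\Gamma\cong K_1$; for $k=1$, connectedness gives $\Gamma\cong K_2$; for $k=2$, $\Gamma$ is a connected $2$-regular graph, hence a single cycle, and since a cycle is a partial cube only when it is even, $\Gamma\cong C_{2m}$, so $g(\Gamma)>6$ forces $m\geqslant 4$. Thus the whole content lies in the case $k\geqslant 3$, which I would treat by contradiction: suppose $\Gamma$ is a finite $k$-regular partial cube with $k\geqslant 3$ and $g(\Gamma)\geqslant 8$.

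The first step is to pin down the isometric dimension. Since the $k$ edges at a vertex lie in $k$ pairwise distinct $\Theta$-classes (Proposition \ref{pro:DWRelationonGeodesic}), $\mathrm{idim}(\Gamma)\geqslant k$. If $\mathrm{idim}(\Gamma)=k$, then $\Gamma$ embeds isometrically, hence as an induced subgraph, into the $k$-regular graph $Q_k$; being itself $k$-regular, and $Q_k$ being connected, $\Gamma$ must then equal $Q_k$, which has girth $4$, contradicting $g(\Gamma)\geqslant 8$. So $\mathrm{idim}(\Gamma)>k$, i.e.\ $\Gamma$ carries extra $\Theta$-classes, and in particular there are adjacent vertices $u,v$ with $\mathcal{F}(u)\neq\mathcal{F}(v)$. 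From here I would imitate the proofs of parts (i) and (ii): introduce the notion of a \emph{faulty} path tailored to girth $\geqslant 8$ --- one in which no $\Theta$-class of an edge recurs too soon along the path and whose endpoints each meet a private $\Theta$-class --- observe that such a path exists because $\mathcal{F}(u)\neq\mathcal{F}(v)$, take a longest one, and extend it by a single edge; using a shortest (hence, by Proposition \ref{pro:ShortestCycleisConvex}, convex) cycle together with the traverse dichotomy of Lemma \ref{lem:GeodesicandTraverse} and $k$-regularity at the branch vertices, one forces a cycle of length less than $g(\Gamma)$, a contradiction.

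The main obstacle --- and the reason this case genuinely needs the machinery of \cite{mar16} rather than a two-line argument --- is that, unlike in parts (i) and (ii), the convex cycles here are \emph{not} short: a convex cycle produced by Lemma \ref{lem:GeodesicandTraverse} has length at least $g(\Gamma)\geqslant 8$. Consequently the step asserting that a faulty path is a geodesic (the analogue of Lemmas \ref{lem:faultyGeodesic} and \ref{lem:MFPathareGeodesic}) can no longer be closed by the quick remark that the convex cycle arising in Case 2 is a $4$-cycle (or a $6$-cycle), and the cycles along the traverse in Case 1 may themselves be long. One must instead carry the lengths of all cycles appearing in the traverses through the argument, weigh them against the minimality of $g(\Gamma)$, and use regularity to show that the extension process cannot terminate, contradicting the finiteness of $\Gamma$. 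This length bookkeeping is the only hard part; once it is in place, Lemma \ref{lem:RegularPCwithGirth>6} follows by assembling the cases $k=0$, $k=1$, $k=2$, and $k\geqslant 3$.
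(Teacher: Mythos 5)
Your disposal of the degenerate cases is fine: $k=0,1$ give $K_1,K_2$, and a connected $2$-regular partial cube is an even cycle, which under $g(\Gamma)>6$ must be $C_{2n}$ with $n\geqslant 4$. Your reduction of the lemma to the claim that no finite $k$-regular partial cube with $k\geqslant 3$ has girth exceeding $6$ is also exactly the right one --- that claim is precisely (the regular case of) the main theorem of Marc's paper \cite{mar16}, and indeed the present paper does not prove the lemma at all; it simply quotes it from that reference. The opening move of your hard case is also sound: $\mathrm{idim}(\Gamma)\geqslant k$ by Proposition \ref{pro:DWRelationonGeodesic}, and a $k$-regular isometric (hence induced) subgraph of the connected $k$-regular graph $Q_k$ is all of $Q_k$, whose girth is $4$.

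The genuine gap is that everything after that point is a plan, not a proof, and you say so yourself: the ``length bookkeeping'' you defer is the entire content of the theorem. The strategy of Lemmas \ref{lem:faultyGeodesic} and \ref{lem:MFPathareGeodesic} closes because every convex cycle met along a traverse or in the half-cycle alternative of Lemma \ref{lem:GeodesicandTraverse} has length $4$ or $6$, so the local $\Theta$-class conditions defining type-I/type-II paths are immediately violated; once convex cycles may have length $8$ or more, no finite list of such local conditions on a path is contradicted by the appearance of one convex cycle, and the extension-of-a-longest-path argument does not terminate in a short cycle as you assert. Marc's actual proof is a full-length paper and proceeds quite differently (induction on the isometric dimension via contraction of $\Theta$-classes, together with a delicate analysis of how traverses intersect), rather than by a souped-up version of the type-I/type-II argument. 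So as written your proposal establishes only the easy reductions; the core case $k\geqslant 3$ remains unproved, and the honest course --- which is what the authors take --- is to cite \cite{mar16} for it.
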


Now, we prove (iii) of Theorem \ref{thm:MainResult}.
\begin{proof}[Proof of (iii) of Theorem \ref{thm:MainResult}]
By Proposition \ref{pro:ShortestCycleisConvex}, $g(\Gamma)=2n>6$, thus (iii) can be obtained directly from Lemma \ref{lem:RegularPCwithGirth>6}.
\end{proof}

\section{Conclusion and problems}
In this paper, we characterize the regular partial cubes whose all convex cycles have the same lengths as hypercubes, Doubled Odd graphs and even cycles of lengths at least 8. As a corollary, we obtain that regular almost-median graphs are exactly hypercubes, extending Mulder's result \cite{mu80a} that regular median graphs are hypercubes.

In \cite{bikms02}, Bre\v sar et al. introduced a class of graphs called {\em tiled graphs}: Let $H_1,H_2,\cdots,$ $H_n$ be subgraphs of a graph $G$. The {\em symmetric sum} $H_1\oplus H_2\oplus\cdots\oplus H_n$ is the subgraph of $G$ induced by the edges of $G$ that appear in an odd number of the subgraphs $H_1,H_2,\cdots,H_n$. Let $C$ be a cycle of $G$. Then a set of 4-cycles $\mathcal{C}=\{C_1,C_2,\cdots,C_n\}$ is called the {\em tiling} of $C$ if $C=C_1\oplus C_2\oplus\cdots\oplus C_n$. A graph $G$ is called {\em tiled} if every cycle of $G$ has a tiling. It was proved that almost-median graphs are tiled partial cubes and tiled partial cubes are semi-median graphs \cite{bikms02}. A natural problem is whether regular tiled partial cubes or regular semi-median graphs are exactly hypercubes.

For the partial cubes with girth 6, Marc  conjectured that
\begin{con}{\em\cite{mar171}}\label{con:VTPCwithGirth6}
Doubled Odd graphs are the only vertex-transitive partial cubes with girth 6. 
\end{con}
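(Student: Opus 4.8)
The plan is to reduce the conjecture to the structural theorem proved in this paper and then to attack a single residual statement about convex cycles. Since every vertex-transitive graph is regular, a vertex-transitive partial cube $G$ with $g(G)=6$ is, in particular, a finite $k$-regular partial cube with $g(G)=6$ for some $k\geqslant 2$; and $k=2$ immediately gives $G\cong C_6\cong\tilde O_2$, so assume $k\geqslant 3$. By Lemmas \ref{lem:TwoConditions} and \ref{lem:EquivalentConditions}, $G\cong\tilde O_k$ if and only if every convex cycle of $G$ is a $6$-cycle, equivalently every $3$-path of $G$ lies in a $6$-cycle. Hence the conjecture is equivalent to the purely structural assertion that \emph{a vertex-transitive partial cube of girth $6$ has no convex cycle of length $\geqslant 8$}, and this is what I would try to establish.

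For the latter I would argue by contradiction. Suppose $G$ has a convex cycle $C=u_0u_1\cdots u_{2m-1}u_0$ with $2m\geqslant 8$, chosen of maximum possible length, and fix $u_0$. By Propositions \ref{pro:DWRelationonIsometricCycle} and \ref{pro:DWRelationonGeodesic}, $C$ meets exactly $m$ pairwise distinct $\Theta$-classes, each crossing $C$ in a pair of antipodal edges, so at $u_0$ only the two classes $F_{u_0u_1}$ and $F_{u_0u_{2m-1}}$ among them occur; I would analyze the remaining $k-2\geqslant 1$ edges at $u_0$, the expected situation being that at least one such edge $u_0w$ has a $\Theta$-class missing $C$ entirely, so that by convexity of $C$ we get $C\subseteq W_{u_0w}$ and $d(w,u_i)=d(u_0,u_i)+1$ for all $i$, i.e. $w$ is a ``cone point'' over the convex $2m$-cycle $C$. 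Vertex-transitivity then forces this configuration to be ubiquitous — every vertex lies on a convex $2m$-cycle and is a cone point over another — and makes the number of convex cycles of each length through a vertex constant. Feeding this rigid local data into the Euler-type inequality of Klav\v zar and Shpectorov \cite{ks122} (which links $|V(G)|$, $|E(G)|$, $\mathrm{idim}(G)$ and the lengths of the convex cycles, via the fact that convex cycles form a cycle-space basis) should collapse the inequality to an equality incompatible with the presence of a convex cycle longer than a hexagon, leaving the hexagon-everywhere case, which gives $G\cong\tilde O_k$ by Lemma \ref{lem:EquivalentConditions}. A variant of the endgame is to reduce instead to the already-settled subcases: cubic vertex-transitive partial cubes \cite{mar171}, regular $2$-arc-transitive partial cubes \cite{xfx222}, and Weichsel's classification of distance-regular partial cubes \cite{we92}.

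I expect the decisive obstacle — and the reason this has remained open — to be the weakness of vertex-transitivity as a homogeneity hypothesis: it transports vertices but not edges, $3$-paths, or $\Theta$-classes, so it does not by itself say that ``all $3$-paths look alike''. The real work is to bootstrap vertex-transitivity into genuine local uniformity, ideally edge-transitivity or transitivity of the vertex-stabilizer on the incident $\Theta$-classes, from which the cone-point and counting arguments run smoothly and the $(k-2)$-edge case analysis above can be completed. The most natural tool, induction on $\mathrm{idim}(G)$ by contracting an $\mathrm{Aut}(G)$-orbit of $\Theta$-classes, is tempting because the contracted graph is again a vertex-transitive partial cube, but it does \emph{not} preserve girth $6$ in general (a hexagon using a contracted class becomes a square); controlling this loss of girth, or replacing the induction by a direct argument on the convex-cycle basis that records which triples of $\Theta$-classes span hexagons, is where the genuine difficulty lies.
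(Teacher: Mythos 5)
The statement you are asked to prove is Conjecture \ref{con:VTPCwithGirth6}, which the paper itself does \emph{not} prove: it is quoted from \cite{mar171} as an open problem, and the paper's only contribution towards it is Theorem \ref{thm:ThreeConditions}, which reduces the conjecture to showing that in a (finite) vertex-transitive partial cube of girth $6$ every convex cycle is a hexagon, equivalently that every $3$-path lies in a hexagon. Your opening reduction is therefore correct and coincides exactly with what the paper establishes and then poses as the future question at the end of Section 4. One small caveat: Lemma \ref{lem:TwoConditions} requires finiteness, so the conjecture must be read as being about finite graphs for your reduction to apply.

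The remainder of your argument, however, is a research programme rather than a proof, and you essentially say so yourself. Two steps in particular are unsupported. First, the existence of a ``cone point'': among the $k-2$ edges at $u_0$ not on the convex cycle $C$ you need one whose $\Theta$-class misses $C$ entirely, but nothing you say rules out that every such edge lies in one of the $m$ classes already crossing $C$ --- Proposition \ref{pro:DWRelationonGeodesic} only forbids \emph{adjacent} edges from being $\Theta$-related, so $u_0w$ could well be $\Theta$-related to a non-incident edge of $C$. Second, even granting the cone point and the (correct) observation that vertex-transitivity makes the number of convex cycles of each length through a vertex constant, the claim that the convex-excess inequality of \cite{ks122} then ``collapses to an equality incompatible with'' a convex cycle of length at least $8$ is asserted, not derived; the proposed fallback reductions to the cubic, $2$-arc-transitive, or distance-regular cases likewise do not cover a general vertex-transitive $G$. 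Since the conjecture remains open, no comparison with a proof in the paper is possible; as submitted, your proposal contains a genuine and essential gap beyond the (valid) reduction to Theorem \ref{thm:ThreeConditions}.
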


In fact, by Lemmas \ref{lem:TwoConditions} and \ref{lem:EquivalentConditions}, we obtain the following theorem:
\begin{thm}\label{thm:ThreeConditions}
Let $G$ be a finite, $k$-regular partial cube with $g(G)=6$. Then the following statements are equivalent:
\begin{enumerate}
\renewcommand{\labelenumi}{\em (\alph{enumi})}
\item all convex cycles of $G$ are 6-cycles;
\item every 3-path of $G$ is in a 6-cycle;
\item $G\cong \tilde{O}_k$.
\end{enumerate}
\end{thm}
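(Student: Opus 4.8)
The plan is to derive Theorem~\ref{thm:ThreeConditions} as an immediate consequence of the two lemmas already in hand, namely Lemma~\ref{lem:TwoConditions} and Lemma~\ref{lem:EquivalentConditions}, so the ``proof'' is really just a bookkeeping argument showing the three named statements are pairwise equivalent. I would first record that Lemma~\ref{lem:EquivalentConditions} already gives (a)$\Longleftrightarrow$(b) directly, with no further work, since its hypothesis is exactly the hypothesis of the present theorem (finite, $k$-regular partial cube with $g(G)=6$).

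Next I would establish (a)$+$(b)$\Longrightarrow$(c). Here Lemma~\ref{lem:TwoConditions} is precisely the statement that, under the standing hypotheses, $G\cong\tilde O_k$ holds if and only if both (a) and (b) hold; so assuming (a) and (b) we conclude $G\cong\tilde O_k$, i.e.\ (c). Combined with the equivalence (a)$\Longleftrightarrow$(b) from the previous paragraph, (a) alone (or (b) alone) already forces (c).

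For the reverse direction (c)$\Longrightarrow$(a) (and hence $\Longrightarrow$(b)), I would again invoke Lemma~\ref{lem:TwoConditions}: if $G\cong\tilde O_k$ then the ``only if'' part of that lemma yields both (a) and (b). One should also note that the girth hypothesis $g(G)=6$ is not an extra assumption one has to verify case by case: as remarked in the proof of (ii) of Theorem~\ref{thm:MainResult}, if all convex cycles of $G$ are $6$-cycles then $g(G)=6$ by Proposition~\ref{pro:ShortestCycleisConvex}, and $\tilde O_k$ visibly has girth $6$; so the hypothesis is automatically met whenever any one of (a), (b), (c) holds. Assembling these implications gives the cycle (a)$\Leftrightarrow$(b), (a)$\wedge$(b)$\Leftrightarrow$(c), and therefore all three statements are equivalent.

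There is essentially no obstacle here — the theorem is a repackaging of Lemmas~\ref{lem:TwoConditions} and~\ref{lem:EquivalentConditions}, both of which are available. The only mild subtlety worth a sentence in the write-up is making sure the logical glue is stated cleanly: Lemma~\ref{lem:TwoConditions} phrases things as ``$G\cong\tilde O_k\iff$ (a)$\wedge$(b)'', so to get the clean three-way equivalence one genuinely needs Lemma~\ref{lem:EquivalentConditions} to collapse (a)$\wedge$(b) to either (a) or (b) individually; I would make that dependence explicit rather than leave it implicit.
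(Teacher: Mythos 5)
Your proposal is correct and matches the paper's approach exactly: the paper obtains Theorem~\ref{thm:ThreeConditions} precisely by combining Lemma~\ref{lem:TwoConditions} (which gives (a)$\wedge$(b)$\Leftrightarrow$(c)) with Lemma~\ref{lem:EquivalentConditions} (which gives (a)$\Leftrightarrow$(b)). Your explicit remark that Lemma~\ref{lem:EquivalentConditions} is genuinely needed to collapse the conjunction to a single condition is the right observation, and nothing further is required.
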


Therefore, the future studies can consider the following question: does (a) or (b) in Theorem \ref{thm:ThreeConditions} hold true for every vertex-transitive partial cube $G$ with girth 6?

\vskip 0.2 cm
\noindent {\bf Acknowledgements:} The authors thank the referees for their careful reviews. This work is partially supported by National Natural Science Foundation of China (Grants No. 12071194, 11571155, 11961067).

\end{document}